\title{Abelian length categories of strongly unbounded type}
\thanks{Version from January 28, 2013.}
\author{Henning Krause}
\address{Fakult\"at f\"ur Mathematik\\
Universit\"at Bielefeld\\ D-33501 Bielefeld\\ Germany}
\email{hkrause@math.uni-bielefeld.de}
\newtheorem{lem}{Lemma}[section]
\newtheorem{prop}[lem]{Proposition}
\newtheorem{cor}[lem]{Corollary}
\newtheorem{thm}[lem]{Theorem}
\newtheorem{conj}[lem]{Conjecture}
\theoremstyle{remark}
\newtheorem{rem}[lem]{Remark}
\theoremstyle{definition}
\newtheorem{defn}[lem]{Definition}
\newtheorem{question}[lem]{Question}
\numberwithin{equation}{section}
\renewcommand{\mod}{\operatorname{\mathsf{mod}}\nolimits}
\DeclareMathOperator*{\colim}{\mathsf{colim}}
\newcommand{\Ob}{\operatorname{\mathsf{Ob}}\nolimits}
\newcommand{\Sp}{\operatorname{\mathsf{Sp}}\nolimits}
\newcommand{\rad}{\operatorname{\mathsf{rad}}\nolimits}
\newcommand{\End}{\operatorname{\mathsf{End}}\nolimits}
\newcommand{\Hom}{\operatorname{\mathsf{Hom}}\nolimits}
\renewcommand{\deg}{\operatorname{\mathsf{deg}}\nolimits}
\newcommand{\Coker}{\operatorname{\mathsf{Coker}}\nolimits}
\newcommand{\sub}{\operatorname{\mathsf{sub}}\nolimits}
\newcommand{\Fp}{\operatorname{\mathsf{Fp}}\nolimits}
\newcommand{\Lex}{\operatorname{\mathsf{Lex}}\nolimits}
\newcommand{\length}{\operatorname{\mathsf{length}}\nolimits}
\newcommand{\lend}{\operatorname{\mathsf{endol}}\nolimits}
\newcommand{\Ab}{\operatorname{\mathsf{Ab}}\nolimits}
\newcommand{\op}{\mathrm{op}}
\newcommand{\comp}{\mathop{\circ}}
\newcommand{\lto}{\longrightarrow}
\newcommand{\xto}{\xrightarrow}
\def\a{\alpha}
\def\b{\beta}
\def\p{\phi}
\def\A{{\mathsf A}}
\def\C{{\mathsf C}}
\def\D{{\mathsf D}}
\def\M{{\mathsf M}}
\def\U{{\mathsf U}}
\def\V{{\mathsf V}}
\def\bbN{{\mathbb N}}
\begin{document}

\maketitle
%\setcounter{tocdepth}{1}
%\tableofcontents

\begin{abstract}
  We discuss the notion of strongly unbounded type for abelian length
  categories; this is closely related to the Second Brauer--Thrall
  Conjecture for artin algebras. A new ingredient is the space of
  characters in the sense of Crawley-Boevey.
\end{abstract}

\section{Introduction}

The Second Brauer--Thrall Conjecture says that over infinite fields
every finite dimensional algebra of infinite representation type is of
strongly unbounded type \cite{Ja1957}. In this note we take a fresh
look at the conjecture. We consider abelian length categories which
are Hom-finite over some commutative ring and discuss the analogue of
`stronly unbounded type' in this slightly more general setting. We use
Crawley-Boevey's theory of characters \cite{CB1992,CB1994a,CB1994b}
and further ideas from \cite{He1997,Kr1998}. 

A character for an abelian length category $\C$ is an integer valued
function $\Ob\C\to\bbN$ which is compatible with the exact structure
(Definition~\ref{de:char}).  It turns out that the collection $\Sp\C$
of irreducible characters carries a rich structure. It is a
topological space via Ziegler's topology \cite{Zi1984}, and we
investigate its basic properties. For instance, the Second
Brauer--Thrall Conjecture amounts to the fact that $\Sp\C$ is discrete
if and only if $\C$ has only finitely many isomorphism classes of
indecomposable objects (Theorem~\ref{th:main}). A crucial property is
the compactness of $\Sp\C$. There is an elegant proof of the Second
Brauer--Thrall Conjecture when $\Sp\C$ is compact. On the other hand,
we point out an obstruction for the compactness of $\Sp\C$ in terms of
certain minimal closed subsets of $\Sp\C$
(Proposition~\ref{pr:sub}). This is illustrated by looking at modules
over hereditary artin algebras (Corolloray~\ref{co:hered}).  The
closed subsets of $\Sp\C$ we look at correspond to subcategories of
$\C$ which are closed under subobjects; their relevance became
apparent in recent work \cite{KrPr2012,Ri2012}. Here, we show that
each irreducible character is determined by such a closed subset
(Corollary~\ref{co:poset}).

At this stage,  not much seems to be known about the space
$\Sp\C$. So we end this note with a list of open problems and hope to
stimulate  further progress.

\section{Artin algebras}

The following conjecture is the modification of the \emph{Second
  Brauer--Thrall Conjecture} suggested by Crawley-Boevey in
\cite[\S1.6]{CB1991}. Recall that the \emph{endolength} of a module
$M$ is the length of $M$ when viewed as a module over its endomorphism
ring.

\begin{conj}[Brauer--Thrall II]\label{co:BT}
  Let $A$ be an artin algebra of infinite representation type. Then for
  some $n\in\bbN$ there are infinitely many non-isomorphic
  indecomposable $A$-modules which are of endolength $n$ and of finite
  length over $A$.
\end{conj}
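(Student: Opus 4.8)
The plan is to rephrase the conjecture inside the framework of this note and then to reduce it to a topological property of the character space. First, put $\C=\mod A$: this is an abelian length category, Hom-finite over its centre, so the space $\Sp\C$ of irreducible characters is defined (Definition~\ref{de:char}). By Theorem~\ref{th:main} the conjecture for $A$ is equivalent to the assertion that $\Sp\C$ is discrete only when $\C$ has finitely many isomorphism classes of indecomposable objects; since $A$ is of infinite representation type, it therefore suffices to show that $\Sp\C$ is \emph{not} discrete. I recall that the isolated points of $\Sp\C$ are precisely the characters $\chi_M$ attached to the finite-length indecomposable objects $M$, the endolength of $\chi_M$ coinciding with the endolength of $M$; so what must be produced is a single \emph{non-isolated} irreducible character, which then automatically has finite endolength but is not one of the $\chi_M$.

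The next step is to run the argument that is already available when $\Sp\C$ is quasi-compact. If $A$ has infinite representation type then $M\mapsto\chi_M$ embeds infinitely many points into $\Sp\C$, so $\Sp\C$ is infinite; as a quasi-compact space all of whose points are isolated is finite, it would follow at once that some character $\chi$ is not isolated. Setting $n=\lend\chi$, a finite number, Crawley-Boevey's theory \cite{CB1992,CB1994a} then turns such a generic character into infinitely many finite-length indecomposable objects whose endolength is bounded in terms of $n$, and a pigeonhole step on the value of the endolength isolates a single $m$ that is realised infinitely often, which is exactly the conclusion of Conjecture~\ref{co:BT}. This is the elegant proof in the compact case alluded to in the introduction.

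Consequently the whole difficulty is concentrated in the quasi-compactness of $\Sp\C$, and I expect this to be the main obstacle. The full Ziegler spectrum of $\Mod A$ is quasi-compact \cite{He1997,Kr1998}, but $\Sp\C$ embeds in it as the set of finite-endolength points, and this subset is in general neither open nor closed --- already over $k[x]$ the adic and Pr\"ufer modules lie in the closure of $\Sp\C$ while having infinite endolength --- so quasi-compactness does not come for free; moreover Proposition~\ref{pr:sub}, as illustrated in Corollary~\ref{co:hered}, exhibits a genuine obstruction to it through certain minimal closed subsets, visible already for hereditary artin algebras. To get past this I would localise rather than work with $\Sp\C$ globally: by Corollary~\ref{co:poset} every irreducible character is cut out by a subcategory of $\C$ closed under subobjects, so one may look for such a subcategory that still carries infinitely many indecomposables yet has a quasi-compact --- or at least, around a suitable point, locally quasi-compact --- character space. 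Failing a structural argument of this kind, the fallback is to invoke Crawley-Boevey's reduction machinery, replacing $A$ by a minimal algebra of infinite representation type and constructing the required generic character, i.e. a suitable one- or few-parameter family of finite-length modules, directly \cite{CB1992,CB1994b}. In all of these approaches the passage from one non-isolated character of finite endolength to the full statement of Conjecture~\ref{co:BT} is soft; it is the compactness behaviour of $\Sp\C$ that carries the real weight.
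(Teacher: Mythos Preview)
The statement is Conjecture~\ref{co:BT}, and the paper does not prove it: it is presented as an open conjecture, known only over algebraically closed or, more generally, perfect fields \cite{Ba1985,Bo1985,BS2005}. There is therefore no proof in the paper to compare your attempt against.

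Your proposal is itself not a proof, as you essentially acknowledge. You correctly reduce the conjecture, via the unconditional equivalence of conditions (2)--(4) in Theorem~\ref{th:main} together with the passage from a non-finite irreducible character to infinitely many indecomposables of fixed endolength (the argument for (2)\,$\Rightarrow$\,(3) in that proof, citing \cite[\S9.6]{CB1992}), to the assertion that $\Sp(\mod A)$ is not discrete whenever $A$ has infinite representation type; and you observe that compactness of $\Sp(\mod A)$ would settle this at once. This is precisely the observation the paper records in its section on open problems (``When $\Sp\C$ is compact, one gets an elegant proof of the Second Brauer--Thrall Conjecture''). Your diagnosis of the obstruction also matches the paper: Corollary~\ref{co:hered} shows that $\Sp(\mod A)$ fails to be compact for wild hereditary $A$, and Proposition~\ref{pr:sub} traces this to minimal infinite subobject-closed subsets lacking an infinite character. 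The strategies you sketch for circumventing this --- localising to a well-chosen subobject-closed subcategory, or constructing a generic module directly via reduction techniques --- are reasonable research directions but remain open; the paper poses the first as an explicit question, and the second is essentially how the known cases were handled and is not known to go through for arbitrary artin algebras. In short, your proposal accurately mirrors the paper's own framing of the problem but does not go beyond it. (One small slip: $k[x]$ is not an artin algebra, so your illustrative example of adic and Pr\"ufer modules lies outside the present setting.)
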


Without going into details, let us mention that this conjecture has
been established for artin algebras over a field $k$ when $k$ is
algebraically closed \cite{Ba1985,Bo1985}, and more generally when $k$
is perfect \cite{BS2005}.

\section{Abelian length categories}
 
Let $k$ be a commutative ring and $\C$ a $k$-linear abelian length
category such that each morphism set in $\C$ has finite length as a
$k$-module. Suppose also that $\C$ has only finitely many
non-isomorphic simple objects.

\begin{defn}[Crawley-Boevey \cite{CB1994b}]\label{de:char}
  A \emph{character} for $\C$ is a function $\chi\colon\Ob\C\to\bbN$
  satisfying the following: \begin{enumerate}
\item $\chi(X\oplus Y)=\chi(X)+\chi(Y)$ for all $X,Y\in\Ob\C$, and 
\item $\chi(X)+\chi(Z)\ge\chi(Y)$ for each exact sequence $X\to Y\to Z\to 0$ in $\C$.
\end{enumerate}
A character $\chi\neq 0$ is \emph{irreducible} if $\chi$ cannot be
written as a sum of two non-zero characters. Note that any character
can be written in a unique way as a finite sum of irreducible
characters \cite{CB1994a}. The \emph{degree} of a character $\chi$ is
\[\deg\chi=\sum_{S}\chi(S)\]
where $S$ runs through a representative set of simple objects in $\C$.
\end{defn}

We denote by $\Sp\C$  the set of irreducible characters for $\C$.
Fix a morphism $\a\colon X\to Y$ in $\C$ and $n\in\bbN$. For each character $\chi$
set \[\chi(\a)=\chi(X)-\chi(Y)+\chi(\Coker\a)\] and let
\begin{equation*}
\U_\a=\{\chi\in\Sp\C\mid \chi(\a)\neq 0\}\quad\text{and}\quad
\V_{\a,n}=\{\chi\in\Sp\C\mid\chi(\a)\le n\}.
\end{equation*}

The following lemma describes a topology on $\Sp\C$; it is the
analogue of Ziegler's topology on the isomorphism classes of
indecomposable pure-injective modules over a ring \cite{Zi1984}.

\begin{lem}\label{le:top}
  The subsets of the form $\U_\a$ where $\a$ runs through all
  morphisms in $\C$ form a basis of open subsets for a topology on
  $\Sp\C$. The subsets of the form $\V_{\a,n}$ are closed. Moreover,
\begin{equation*}
\V_{n}=\{\chi\in\Sp\C\mid\deg\chi\le n\}
\end{equation*}
is closed and compact.
\end{lem}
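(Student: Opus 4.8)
The plan is to verify three things: that the $\U_\a$ form a basis for a topology, that each $\V_{\a,n}$ is closed, and that $\V_n$ is closed and compact. For the basis property, I would first record the key combinatorial fact: for any two morphisms $\a$ and $\b$ one can build a morphism $\g$ (essentially a ``tensor-like'' or block construction applied to $\a$ and $\b$, or a pullback/pushout combination) so that $\chi(\g)\neq 0$ if and only if $\chi(\a)\neq 0$ and $\chi(\b)\neq 0$ for every character $\chi$; then $\U_\a\cap\U_\b=\U_\g$, which is exactly what a basis needs (together with the observation that $\Sp\C$ itself is covered, e.g.\ by taking $\a$ to be a suitable map out of a generator, since an irreducible character is nonzero). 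The crux here is producing $\g$: I expect $\chi(\a)$ to be expressible in a more symmetric way — the defining axioms of a character should let one show $\chi(\a)$ depends only on $\Ker\a$ and $\Coker\a$ (indeed $\chi(\a)=\chi(\Ker\a)+\chi(\Coker\a)-\chi(\Im\a)+\ldots$ needs care, but the upshot is additivity of $\a\mapsto\chi(\a)$ under direct sums and a multiplicativity under composition/tensor), and from that the construction of $\g$ follows.

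Next, $\V_{\a,n}$ is closed because its complement is $\{\chi\mid\chi(\a)\ge n+1\}$, and I claim this complement is open: $\chi(\a)\ge n+1$ should be detectable by the nonvanishing of $\chi(\b)$ for a morphism $\b$ built from $\a$ (for instance an appropriate power or direct sum construction, using that $\chi$ takes values in $\bbN$ so that ``$\ge n+1$'' is ``$\neq 0$'' after shifting). Concretely, if $\chi(\a)=\chi(X)-\chi(Y)+\chi(\Coker\a)$, one forms $\a^{(m)}$ on $m$-fold sums so that $\chi(\a^{(m)})=m\chi(\a)$, and then subtracts off a fixed contribution; packaging this as a single morphism whose $\U$ equals the complement of $\V_{\a,n}$ gives the result. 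Then $\V_n=\bigcap_{S}\{\chi\mid\chi(S)\le n\}$, and each $\{\chi\mid\chi(S)\le n\}$ is a $\V_{\a,n}$ for $\a$ the zero map on $S$ (since $\chi(0\colon S\to 0)=\chi(S)$), so $\V_n$ is a finite intersection of closed sets, hence closed.

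The substantive part is compactness of $\V_n$. The natural approach is to realize $\V_n$ as a closed subspace of a product of finite sets and invoke Tychonoff. Every $\chi\in\V_n$ is determined by its values on objects; bounding $\deg\chi\le n$ forces $\chi(X)$ to be bounded in terms of the length of $X$ (using axiom (2) along a composition series of $X$: $\chi(X)\le(\text{something linear in the composition factors})\le n\cdot\length_k$-type bound), so $\chi$ lives in $\prod_{X}\{0,1,\dots,N_X\}$ for suitable finite bounds $N_X$, a compact space. The conditions defining a character — additivity on $X\oplus Y$ and the inequality on each three-term exact sequence — are each closed conditions (each involves only finitely many coordinates), and irreducibility... here is the delicate point: irreducibility is \emph{not} obviously a closed condition. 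I expect the right move is Crawley-Boevey's decomposition theorem (cited in Definition~\ref{de:char}): every character is uniquely a finite sum of irreducibles, and for characters of degree $\le n$ there are only finitely many irreducibles available in any given summand bound, so one argues that a limit of irreducible characters of degree $\le n$ is again irreducible by a rigidity/finiteness argument — or one shows directly that the set of \emph{all} characters of degree $\le n$ is compact and that $\Sp\C\cap\{\deg\le n\}$ is closed inside it because the complement (reducible ones) is open. I anticipate this last step — showing the reducible characters of bounded degree form an open set, equivalently that irreducibility is preserved under the relevant limits — to be the main obstacle, and I would handle it via the unique decomposition together with the observation that the ``number of irreducible summands'' is a lower-semicontinuous function bounded above by $\deg\chi\le n$.
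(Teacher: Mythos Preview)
Your proposal has the right overall shape but misses the single idea that makes the proof go through. The paper does \emph{not} work directly with morphisms $\a$ in $\C$; instead it passes to the abelian category $\A=\Fp(\C,\Ab)$ of finitely presented functors and observes that the assignment $\chi\mapsto\big(F_\a\mapsto\chi(\a)\big)$ is a bijection between characters on $\C$ and \emph{additive} functions on $\A$ (well-definedness uses Schanuel's Lemma). After this translation, $\U_\a$, $\V_{\a,n}$ and $\V_n$ become statements about additive functions on an abelian category, and the paper simply cites established results (Lemmas~B.5, B.6, C.1 of \cite{Kr2012}) for those. The finiteness of simples is invoked only to express $\deg\chi$ as $\chi$ evaluated at a single projective object of $\A$, which is what feeds into the compactness lemma.

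Against this, your direct approach stalls at each of the three steps. For the basis, you promise a morphism $\g$ with $\U_\g=\U_\a\cap\U_\b$ via an unspecified ``tensor-like or block'' construction; there is no such operation available on morphisms of $\C$, and your attempted formula $\chi(\a)=\chi(\Ker\a)+\chi(\Coker\a)-\chi(\Im\a)+\ldots$ is precisely the place where one needs the functor-category viewpoint to see that $\chi(\a)$ is the value of an additive function at an honest object $F_\a\in\A$. For closedness of $\V_{\a,n}$, the plan to ``subtract off a fixed contribution'' from $m\chi(\a)$ cannot be carried out at the level of morphisms in $\C$: there is no morphism $\b$ with $\chi(\b)=\chi(\a)-n$ for all $\chi$. (Incidentally, your identity $\V_n=\bigcap_S\{\chi\mid\chi(S)\le n\}$ is wrong: the left side constrains the \emph{sum} $\sum_S\chi(S)$, not the maximum. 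The correct reduction is $\V_n=\V_{\a,n}$ for $\a$ the zero map on $\bigoplus_S S$.) For compactness, your Tychonoff embedding is sound and the bound $\chi(X)\le n\cdot\length X$ is correct, but you correctly flag and then fail to resolve the real difficulty: showing that irreducibility is preserved under limits in the product topology. Your proposed fix---lower-semicontinuity of the number of irreducible summands---is not established and is not obvious; this is exactly the depth that the cited lemmas in \cite{Kr2012} (ultimately resting on the Ziegler/pure-injective machinery) are absorbing.

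In short: the missing ingredient is the passage to $\Fp(\C,\Ab)$. Once $\chi(\a)=\chi(F_\a)$ with $F_\a$ an object of an abelian category, all three assertions become standard facts about the spectrum of irreducible additive functions, and the constructions you were groping for (intersection of basic opens, openness of $\{\chi(F)\ge n+1\}$, compactness of $\{\chi(P)\le n\}$) have clean proofs in that setting.
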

\begin{proof}
  Consider the abelian category $\A=\Fp(\C,\Ab)$ of additive functors
  $F\colon\C\to\Ab$ into the category $\Ab$ of abelian groups that
  admit a \emph{presentation} \[\Hom(Y,-)\lto\Hom(X,-)\lto F\lto 0.\]
  A function $\chi\colon\Ob\A\to\bbN$ is called \emph{additive}
  provided that $\chi(F)=\chi(F')+\chi(F'')$ if $0\to F'\to F\to
  F''\to 0$ is an exact sequence. Restricting a function
  $\chi\colon\Ob\A\to\bbN$ to $\Ob\C$ by setting
  $\chi(X)=\chi(\Hom(X,-))$ gives a bijection between the additive
  functions $\Ob\A\to\bbN$ and the characters $\Ob\C\to\bbN$.  The
  inverse map takes a character $\chi$ for $\C$ to the function
  $\Ob\A\to\bbN$ which again we denote by $\chi$ and which is defined
  by $\chi(F)=\chi(\a)$ when $F$ is presented by a morphism $\a\colon
  X\to Y$ in $\C$; this does not depend on the choice of $\a$ by
  Schanuel's Lemma.

  Now the assertions follow from properties of additive functions on
  $\A$ established in Lemmas~B.5, B.6, and C.1 in \cite{Kr2012}.  For the
  compactness of $\V_n$ one uses  that $\C$ has only finitely many
  simple objects.
\end{proof}

\begin{rem}
  To each irreducible character $\chi\in\Sp\C$ corresponds a simple
  object $S_\chi$ in some abelian quotient category of $\A$. The
  endomorphism ring of $S_\chi$ is a division ring; it is an
  interesting invariant of $\chi$ (see \cite[Remark~B.3]{Kr2012}) but
  not relevant here.
\end{rem}

The \emph{character} and the \emph{degree} of an object $M$ in $\C$
are defined
by \[\chi_M(-)=\length_{\End(M)}\Hom(-,M)\qquad\text{and}\qquad \deg
M=\deg\chi_M.\] Note that $\chi_M$ is irreducible when $M$ is
indecomposable, by \cite[Theorem~3.6]{CB1994b}. A character of the
form $\chi_M$ with $M$ in $\C$ is called \emph{finite}.

\begin{thm}\label{th:main}
  Let $\C$ be a $k$-linear Hom-finite abelian category having only
  finitely many non-isomorphic simple objects. Consider the following
  statements.
\begin{enumerate}
\item The number of isomorphism classes of indecomposable objects in $\C$ is finite.  
\item For every $n\in\bbN$ the number of non-isomorphic indecomposable
  objects in $\C$ which are of degree $n$ is finite.
\item Every irreducible character for $\C$ is finite.
\item The space of irreducible characters is discrete.
\end{enumerate} 
Then \emph{(2)--(4)} are equivalent. Assuming in addition that
Conjecture~\ref{co:BT} holds, they are also equivalent to \emph{(1)}.
\end{thm}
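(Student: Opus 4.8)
The plan is to prove the cycle of implications (2)$\Rightarrow$(3)$\Rightarrow$(4)$\Rightarrow$(2), and then, under Conjecture~\ref{co:BT}, close the loop with (1)$\Leftrightarrow$(2). The statement (1)$\Rightarrow$(2) is trivial (finitely many indecomposables altogether certainly forces finitely many of any fixed degree), so the only place where the conjecture enters is the implication (2)$\Rightarrow$(1); indeed, if (2) holds but there were infinitely many indecomposables, then by Brauer--Thrall~I (which is a theorem for Hom-finite length categories, e.g.\ via the Harada--Sai lemma) the degrees would be unbounded, and Conjecture~\ref{co:BT} --- phrased here for artin algebras, but applied after reducing $\C$ to a module category, since $\C\simeq\mod\Lambda$ for a suitable artin algebra $\Lambda$ once $k$ is artinian, or more directly via the character-theoretic analogue --- would then furnish, for some fixed $n$, infinitely many indecomposables of bounded degree, contradicting (2). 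I should be slightly careful here: the reduction of a general $k$-linear Hom-finite length category with finitely many simples to an honest artin algebra requires $\End$ of a progenerator to be an artin algebra, which holds since morphism sets have finite $k$-length; alternatively one invokes the character analogue of Brauer--Thrall~II directly, and this bookkeeping is the one genuinely delicate point in the (1)-part.

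For the core equivalence, I would start with (2)$\Rightarrow$(3). Let $\chi\in\Sp\C$ be irreducible. By Lemma~\ref{le:top} the set $\V_n=\{\chi'\mid\deg\chi'\le n\}$ is compact. The key observation is that the finite characters $\chi_M$ with $M$ indecomposable of degree $\le n$ are, by hypothesis (2), finite in number, and every $\chi'\in\V_n$ that is finite is a sum of such $\chi_M$; since $\chi'$ is irreducible it equals a single $\chi_M$. So on $\V_n$ the finite irreducible characters form a finite set. To show an arbitrary irreducible $\chi$ is finite one argues that the finite characters are dense, or rather that $\chi$ lies in the closure of the finite ones and then uses that a compact space in which a finite subset is dense must have that subset be everything up to the non-Hausdorff subtlety --- more precisely, one uses the standard Crawley-Boevey argument: if $\chi$ is not finite, build a descending chain of open neighbourhoods $\U_{\a_1}\supseteq\U_{\a_2}\supseteq\cdots$ separating $\chi$ from more and more finite characters, and use compactness of $\V_n$ (applied for the degree $n=\deg\chi$, noting $\chi(\a)\le\deg\chi$ forces relevant morphisms to be controlled) to derive a contradiction. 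This compactness-driven pigeonhole step is, I expect, the main obstacle, since it is where the finiteness in (2) has to be leveraged against the topology.

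Next, (3)$\Rightarrow$(4): assuming every irreducible character is finite, I want each $\chi=\chi_M$ (with $M$ indecomposable) to be an isolated point. The natural candidate neighbourhood is $\U_\a$ for a morphism $\a$ chosen so that $\chi_M(\a)\ne0$ but $\chi_N(\a)=0$ for all indecomposable $N\not\cong M$ of degree $\le\deg M$; such an $\a$ exists because in the functor category $\A=\Fp(\C,\Ab)$ the simple functor $S_M$ associated to $M$ is finitely presented, say by $\a$, and then $\chi_N(\a)=\chi_N(S_M)$ counts composition factors, which vanishes for $N\not\cong M$ while it is positive for $N=M$ --- here one also needs that $\V_{\deg M}$ already contains all irreducible characters of small degree, so finiteness of that set (from the just-proved (3)$\Rightarrow$ finitely many finite characters of bounded degree, combined with (3) itself) lets us choose $\a$ to avoid all of them at once. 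Then $\U_\a\cap\V_{\deg M}=\{\chi_M\}$, and since $\V_{\deg M}$ is open-closed-ish enough --- actually one takes $\U_\a$ small and intersects, using that any $\chi'$ with $\chi'(\a)\ne0$ has $\deg\chi'\le$ (length data read off from $\a$) --- one concludes $\{\chi_M\}$ is open. Finally (4)$\Rightarrow$(2): if $\Sp\C$ is discrete and there were infinitely many indecomposables of some fixed degree $n$, then $\V_n$ is an infinite discrete compact space, which is absurd; so only finitely many indecomposables of each degree exist, giving (2). This last step is the cleanest, being pure point-set topology applied to the compactness in Lemma~\ref{le:top}.
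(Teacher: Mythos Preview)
Your overall architecture matches the paper's (a cycle through (2)--(4), with the conjecture only entering for (2)$\Rightarrow$(1)), and your (4)$\Rightarrow$(2) via ``$\V_n$ compact and discrete hence finite'' is exactly the paper's (3)$\Rightarrow$(2). Your (3)$\Rightarrow$(4) via the simple functor $S_M$ is the same idea as the paper's Lemma~\ref{le:isolated}: a presentation of $S_M$ is precisely a left almost split morphism out of $M$. So far so good.

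There are, however, two genuine gaps.

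\textbf{The reduction to an artin algebra.} You assume $\C\simeq\mod\Lambda$ via a progenerator, but a Hom-finite abelian length category with finitely many simples need not have enough projectives (or injectives). The paper handles this explicitly: either some simple $S$ has no injective envelope in $\C$, in which case the injective envelope $E$ of $S$ in $\Lex(\C^{\op},\Ab)$ has degree one, is a union of degree-one finite-length objects $E_i$ of unbounded length, and $\chi_E$ is a non-finite, non-isolated character---so all of (1)--(4) fail simultaneously; or $\C$ has enough injectives, and \emph{then} $\Hom(-,E)$ for an injective cogenerator $E$ identifies $\C$ with $\mod A$ for $A=\End(E)$. Only after this dichotomy do left almost split morphisms (hence your simple functors $S_M$) exist for every indecomposable, and only then is Conjecture~\ref{co:BT} applicable.

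\textbf{The implication (2)$\Rightarrow$(3).} Your proposed argument---finite characters are dense in the compact set $\V_n$, and under (2) there are only finitely many of them, so a pigeonhole/compactness contradiction ensues---does not go through, and you correctly flag it as the obstacle. The Ziegler topology is not Hausdorff: a finite dense subset of a compact set need not exhaust it, and a descending chain of basic opens $\U_{\alpha_i}$ containing $\chi$ need not eventually avoid all the finitely many finite characters (there is no reason every open neighbourhood of $\chi$ is contained in $\V_n$, so it may keep hitting finite characters of large degree). The paper does not attempt a purely topological argument here. Instead it invokes Crawley-Boevey's structure theory: an irreducible character that is not finite corresponds to an indecomposable endofinite $A$-module of infinite length, and by \cite[\S9.6]{CB1992} such a module forces infinitely many non-isomorphic finite-length indecomposables of some fixed endolength; since $\deg M\le\lend M$, this contradicts (2). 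That external input is the missing ingredient in your sketch.
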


We need some preparations for the proof. Let $\A=\Lex(\C^\op,\Ab)$ be
the category of left exact functors $\C^\op\to\Ab$. This is a
Grothendieck abelian category and $\C$ identifies via the Yoneda
embedding taking $X$ in $\C$ to $\Hom(-,X)$ with the full subcategory
of finite-length objects in $\A$; see \cite[Chap.~II]{Ga1962} for
details. 

As before and following \cite{CB1994b}, we assign to each object $M$
in $\A$ its character $\chi_M$ (a function
$\Ob\C\to\bbN\cup\{\infty\}$), and $M$ is by definition
\emph{endofinite} if the values of $\chi_M$ are finite.

\begin{thm}[Crawley-Boevey]
The assignment $M\mapsto \chi_M$
induces a bijection between the isomorphism classes of indecomposable
endofinite objects in $\A$ and the irreducible
characters for $\C$.
\end{thm}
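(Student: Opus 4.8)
The plan is to obtain this as the special case $\mathcal D=\A$ of Crawley-Boevey's theorem identifying indecomposable endofinite objects of a locally finitely presented category with irreducible additive functions on its functor category \cite{CB1994a,CB1994b}, and to indicate the main ingredients. Write $\B=\Fp(\C,\Ab)$ for the abelian category of functors $\C\to\Ab$ admitting a presentation $\Hom_\C(Y,-)\to\Hom_\C(X,-)\to F\to0$ (the category denoted $\A$ in the proof of Lemma~\ref{le:top}); restriction along the Yoneda embedding identifies characters for $\C$, valued in $\bbN\cup\{\infty\}$, with additive functions $\Ob\B\to\bbN\cup\{\infty\}$, compatibly with irreducibility. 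Since $\A=\Lex(\C^\op,\Ab)=\Ind(\C)$ is locally finitely presented with $\C$ as its subcategory of finitely presented objects and $\B$ as its functor category, this is precisely Crawley-Boevey's setting. I would use the tensor product $-\otimes_\C-\colon\B\times\A\to\Ab$, right exact in each variable, with $\Hom_\C(X,-)\otimes_\C M=M(X)$ and $F\otimes_\C\Hom_\C(-,X)=F(X)$; it carries a natural $\End_\A(M)$-action, and a short computation (Schanuel's lemma dealing with the choice of presentation) yields $\chi_M(F)=\length_{\End_\A M}(F\otimes_\C M)$, in particular $\chi_M(X)=\length_{\End_\A M}M(X)$ for $X\in\C$.

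First I would check that $M\mapsto\chi_M$ has the correct target. Additivity of $\chi_M$ on direct sums is immediate since $M$ is an additive functor; for an exact sequence $X\to Y\to Z\to0$ in $\C$, left exactness of $M\colon\C^\op\to\Ab$ gives an exact sequence $0\to M(Z)\to M(Y)\to M(X)$, whence $\chi_M(X)+\chi_M(Z)\ge\chi_M(Y)$, so $\chi_M$ is a character, finite-valued exactly when $M$ is endofinite. If $M$ is moreover indecomposable endofinite, then $\End_\A(M)$ is local by Crawley-Boevey's structure theory, and $\chi_M$ is irreducible by the argument of \cite[Theorem~3.6]{CB1994b}, already invoked above for objects of $\C$.

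The substance is injectivity and surjectivity, which I would handle via a reconstruction of an endofinite object from its character. Given an irreducible character $\chi$, regarded as an additive function on $\B$, form the Serre subcategory $\mathcal S_\chi=\{F\in\B\mid\chi(F)=0\}$ and the localization $\bar\B=\B/\mathcal S_\chi$, on which $\chi$ induces a faithful additive function; irreducibility of $\chi$ should translate into a minimality of $\mathcal S_\chi$ forcing $\bar\B$ to possess, up to isomorphism, a unique simple object $S_\chi$, on which the induced function takes the value $1$, with endomorphism ring $D_\chi$ a division ring. One then produces an indecomposable endofinite $M\in\A$ with $\chi_M=\chi$ through the standard identification of pure-injective objects of a locally finitely presented category with injective objects of its functor category: $M$ is the pure-injective object of $\A$ corresponding to an injective envelope of $S_\chi$, and one verifies that $F\otimes_\C M$ has $\End_\A(M)$-length $\chi(F)$ using exactness of localization and faithfulness, while $\End_\A(M)$ is local with residue division ring $D_\chi$, so $M$ is indecomposable. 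For injectivity I would run this backwards: an indecomposable endofinite $M$ is pure-injective, and for such $M$ the pp-definable subgroups of each group $M(X)$ coincide with its $\End_\A(M)$-submodules, so $\chi_M$ already records the subgroup-lattice data determining $M$; concretely $\mathcal S_{\chi_M}=\{F\in\B\mid F\otimes_\C M=0\}$, and $M$ is recovered from $\mathcal S_{\chi_M}$ by the construction above, so $\chi_M=\chi_N$ forces $M\cong N$.

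The main obstacle is this reconstruction: recovering an endofinite object from the Serre subcategory $\mathcal S_\chi$ --- equivalently, from the lattice of pp-definable subgroups of the groups $M(X)$ --- which rests on Crawley-Boevey's analysis of endofinite objects (their pure-injectivity, and the coincidence of pp-definable with endomorphism submodules) rather than on any formal manipulation of characters. A secondary delicate point is making precise how the irreducibility of $\chi$ produces the minimality of $\mathcal S_\chi$, hence the uniqueness and multiplicity one of the simple object $S_\chi$.
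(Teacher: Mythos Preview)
Your proposal is correct and considerably more detailed than what the paper itself offers: the paper's entire proof is the citation ``This is Theorem~3.6 in \cite{CB1994b}; see also Proposition~B.2 in \cite{Kr2012}.'' Your sketch accurately unpacks Crawley-Boevey's argument---the dictionary between characters on $\C$ and additive functions on $\Fp(\C,\Ab)$, the tensor formula $\chi_M(F)=\length_{\End_\A M}(F\otimes_\C M)$, the reconstruction of an indecomposable endofinite object from the Serre subcategory $\mathcal S_\chi$ via the injective envelope of the associated simple, and the coincidence of pp-definable subgroups with $\End_\A(M)$-submodules for endofinite $M$---so there is no gap, only an expansion of what the paper treats as a black box.
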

\begin{proof} 
This is Theorem~3.6 in \cite{CB1994b};
see also Proposition~B.2 in \cite{Kr2012}.
\end{proof}

The next lemma shows that the finite irreducible characters form a
dense subset of $\Sp\C$.

\begin{lem}\label{le:cl}
Let $\chi_M$ be an irreducible character for $\C$ where $M$ is an object
in $\A$. Write $M=\colim_i M_i$ as a filtered colimit of
objects in $\C$ and let $\U$ be the set of irreducible characters of
the form $\chi_N$ with $N$ a direct summand of some $M_i$. Then
$\chi_M$ belongs to the closure of $\U$.
\end{lem}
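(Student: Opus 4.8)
The plan is to show that every basic open neighbourhood $\U_\a$ of $\chi_M$ meets $\U$. So fix a morphism $\a\colon X\to Y$ in $\C$ with $\chi_M(\a)\neq 0$, i.e.\ $\chi_M(F)\neq 0$ where $F\in\A$ is the functor presented by $\a$ (using the identification of characters on $\C$ with additive functions on $\Fp(\C,\Ab)$ from the proof of Lemma~\ref{le:top}, or more directly the value $\chi_M(X)-\chi_M(Y)+\chi_M(\Coker\a)$). First I would unwind what $\chi_M(\a)\neq 0$ means: since $\chi_M(-)=\length_{\End M}\Hom(-,M)$ and $\End M$ acts on the exact sequence $\Hom(Y,M)\to\Hom(X,M)\to\Hom(\Coker\a,M)$, the quantity $\chi_M(\a)$ is the $\End M$-length of the middle homology of $\Hom(\a,M)$ together with the relevant kernel/cokernel pieces; the key point is simply that it is strictly positive and finite (finiteness because $M$ is endofinite, $\chi_M$ being an irreducible character).

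Next I would pass to the filtered colimit $M=\colim_i M_i$. Since $\Hom(X,-)$ and $\Hom(Y,-)$ commute with filtered colimits in $\A$ — here one uses that $X,Y$ are objects of $\C$, i.e.\ finitely presented objects of the locally finitely presented category $\A=\Lex(\C^\op,\Ab)$ — the abelian group $\chi_M(F)$-controlling data, namely $\Hom(X,M)$, $\Hom(Y,M)$ and $\Hom(\Coker\a,M)$, are themselves filtered colimits of the corresponding groups for the $M_i$. Because these colimits are filtered, a nonzero element of the cokernel/kernel defining $\chi_M(\a)$ already appears at some finite stage $i$, which forces $\chi_{M_i}(\a)\neq 0$ for that $i$. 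Thus $\chi_{M_i}\in\U_\a$. The remaining step is to replace $M_i\in\C$ by an indecomposable: decompose $M_i=\bigoplus_j N_j$ into indecomposables in $\C$ (possible since $\C$ is a length category); then $\chi_{M_i}=\sum_j\chi_{N_j}$, and $\chi_{M_i}(\a)\neq 0$ forces $\chi_{N_j}(\a)\neq 0$ for some $j$. Each $\chi_{N_j}$ is irreducible by \cite[Theorem~3.6]{CB1994b}, so $\chi_{N_j}\in\U\cap\U_\a$, and we are done.

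I would organize the write-up around the single slogan: $\U_\a$ is open, $\chi_M\in\U_\a$, and the condition $\chi(\a)\neq 0$ is ``finitely detected'' along a filtered colimit. The main obstacle is the second step — justifying that $\chi_M(\a)\neq 0$ propagates to $\chi_{M_i}(\a)\neq 0$ for some $i$. One must be slightly careful because $\chi(\a)$ is not literally $\dim$ of one functor value but a signed combination $\chi(X)-\chi(Y)+\chi(\Coker\a)$; the clean way around this is to work on the functor side, realizing $\chi_M(\a)$ as the additive-function value $\chi_M(F)$ with $F=\Coker(\Hom(\a,-))\in\Fp(\C,\Ab)$, and to note that $F$ is a finitely presented functor so that evaluation $F\mapsto (\text{the }\End M\text{-module structure on})$ is compatible with the filtered colimit $M=\colim M_i$ in the sense that $\chi_M(F)=0$ as soon as $\chi_{M_i}(F)=0$ for all $i$; contrapositively, $\chi_M(F)\neq 0$ yields some $i$ with $\chi_{M_i}(F)\neq 0$. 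Once this finite-detection lemma is in place, everything else is immediate. It may be cleanest to isolate that lemma — essentially: for $F\in\Fp(\C,\Ab)$ the set $\{\chi\in\Sp\C\mid\chi(F)\neq 0\}$ is open and its characteristic behaviour is continuous along filtered colimits — and cite the relevant Lemma from \cite{Kr2012} used already in the proof of Lemma~\ref{le:top}.
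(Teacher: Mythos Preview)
Your proposal is correct and follows essentially the same route as the paper: fix a basic open $\U_\a$, pass to the finitely presented functor $F=\Coker(\Hom(\a,-))$, use that $F$ commutes with the filtered colimit $M=\colim_i M_i$, and then split off an indecomposable summand. The paper dissolves your ``main obstacle'' in one line by observing that $\chi_M(\a)$ is not merely a signed combination but literally equals $\length_{\End M}F(M)$, so $\chi_M\in\U_\a$ iff $F(M)\neq 0$; then $\colim_i F(M_i)\cong F(M)$ immediately gives some $i$ with $F(M_i)\neq 0$, and hence some indecomposable summand $N$ of $M_i$ with $\chi_N\in\U_\a$.
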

\begin{proof}
  Let $\a\colon X\to Y$ be a morphism in $\C$ and $F\colon\A\to\Ab$
  the corresponding functor with
  presentation \[\Hom(Y,-)\lto\Hom(X,-)\lto F\lto 0.\] Observe that
  $\chi_M\in \U_\a$ iff $F(M)\neq 0$. We have $\colim_i F(M_i)\xto{\sim}
  F(M)$, and therefore $\chi_M\in\U_\a$ implies $\chi_N\in \U_\a$ for
  some $\chi_N\in\U$. Thus $\chi_M$ belongs to the closure of $\U$.
\end{proof}

\begin{cor}\label{co:top}
The finite irreducible characters  form a dense subset of $\Sp\C$.\qed
\end{cor}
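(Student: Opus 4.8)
The statement to prove is Corollary~\ref{co:top}: the finite irreducible characters form a dense subset of $\Sp\C$. The plan is to reduce this immediately to Lemma~\ref{le:cl}. Given an arbitrary irreducible character $\chi$ for $\C$, by the theorem of Crawley-Boevey quoted just above we may write $\chi = \chi_M$ for some indecomposable endofinite object $M$ in $\A = \Lex(\C^\op,\Ab)$. The key general fact about a Grothendieck category in which $\C$ sits as the subcategory of finite-length objects is that every object is a filtered colimit of its finite-length subobjects; in particular we may write $M = \colim_i M_i$ with each $M_i$ in $\C$. This is exactly the hypothesis needed to invoke Lemma~\ref{le:cl}.

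Now I would argue that the set $\U$ appearing in Lemma~\ref{le:cl} — the irreducible characters of the form $\chi_N$ with $N$ a direct summand of some $M_i$ — consists only of \emph{finite} characters. Indeed each $M_i$ lies in $\C$, hence any direct summand $N$ of $M_i$ again lies in $\C$, and for $N$ in $\C$ the character $\chi_N(-) = \length_{\End(N)}\Hom(-,N)$ takes only finite values because $\C$ is Hom-finite (so $\Hom(X,N)$ has finite length over $k$, a fortiori over $\End(N)$); thus $\chi_N$ is a finite character by definition. If moreover $N$ is chosen indecomposable (decomposing each $M_i$ into indecomposables in the length category $\C$), then $\chi_N$ is irreducible by \cite[Theorem~3.6]{CB1994b}, so $\U$ is a set of finite irreducible characters. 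Lemma~\ref{le:cl} then says $\chi = \chi_M$ lies in the closure of $\U$, hence in the closure of the set of all finite irreducible characters.

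Since $\chi \in \Sp\C$ was arbitrary, this shows $\Sp\C$ equals the closure of its subset of finite irreducible characters, which is the assertion of density. There is essentially no obstacle here: the corollary is a direct packaging of Lemma~\ref{le:cl} together with the two standing facts that (a) objects of a locally finite Grothendieck category are filtered colimits of their finitely presented subobjects and (b) Hom-finiteness forces characters of objects in $\C$ to be finite. The only point requiring a word of care is the passage from ``direct summand of some $M_i$'' to an \emph{irreducible} such character, which is why one should decompose the $M_i$ in $\C$ before applying Lemma~\ref{le:cl}; after that the proof is the one-line remark already indicated by the ``\qed'' in the excerpt.
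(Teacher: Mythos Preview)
Your proof is correct and is exactly the argument the paper has in mind: the sentence preceding Lemma~\ref{le:cl} announces that this lemma ``shows that the finite irreducible characters form a dense subset of $\Sp\C$,'' and the \qed on the corollary indicates that no further work is needed beyond the unpacking you give. Your extra remark about decomposing each $M_i$ into indecomposables to ensure irreducibility of $\chi_N$ is already implicit in the lemma's definition of $\U$ as a set of \emph{irreducible} characters, but it does no harm to make it explicit.
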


The following lemma identifies the isolated points in $\Sp\C$. Recall
that a morphism $\p\colon M\to N$ in $\C$ is \emph{left almost split} if $\p$ is
not a split monomorphism and every morphism $M\to N'$ in $\C$ which
is not a split monomorphism factors through $\p$.

\begin{lem}\label{le:isolated}
  Let $\chi\in\Sp\C$. Then $\{\chi\}$ is open if and only if there is
  a left almost split morphism $M\to N$ in $\C$ with $\chi=\chi_M$.
\end{lem}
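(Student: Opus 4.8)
The plan is to work inside the category $\Fp(\C,\Ab)$ of finitely presented additive functors $\C\to\Ab$, where a morphism $\a\colon X\to Y$ gives the functor $F_\a$ with presentation $\Hom(Y,-)\to\Hom(X,-)\to F_\a\to 0$; recall from the proof of Lemma~\ref{le:top} that a character extends to this category with $\chi(F_\a)=\chi(\a)$, and that $\chi_L(F_\a)=\length_{\End L}F_\a(L)$ for $L\in\C$. Two preliminary observations will be used. First, if $\{\chi\}$ is open then, being nonempty and open, it meets the dense set of finite irreducible characters (Corollary~\ref{co:top}), so $\chi$ is finite, hence of the form $\chi=\chi_M$ with $M$ indecomposable; I fix such an $M$, note that $\End M$ is local, and write $\rad(M,-)\subseteq\Hom(M,-)$ for the subfunctor of non-split-monomorphisms out of $M$, so that $S^M:=\Hom(M,-)/\rad(M,-)$ is a simple functor with $S^M(L)\neq0$ exactly when $M$ is a summand of $L$. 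Second, every point of $\Sp\C$ is closed: if $\psi\neq\chi$ in $\Sp\C$ but $\chi(\a)\ge\psi(\a)$ for every morphism $\a$, then (taking $\a\colon X\to 0$, for which $\chi(\a)=\chi(X)$) $\chi-\psi$ is a well-defined non-negative additive function whose subadditivity is precisely these inequalities, hence a character; then $\chi=\psi+(\chi-\psi)$ contradicts either $\chi\neq\psi$ or the irreducibility of $\chi$. So some $\V_{\a,\chi(\a)}$ separates $\psi$ from $\chi$, and $\overline{\{\chi\}}=\{\chi\}$.

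For the direction ``left almost split $\Rightarrow$ isolated'', let $\g\colon M\to N$ be left almost split. I would first check $\rad(M,-)=\Im(\g^*\colon\Hom(N,-)\to\Hom(M,-))$: a morphism factoring through the non-split-monomorphism $\g$ is again not a split monomorphism, and conversely every non-split-monomorphism out of $M$ factors through $\g$ by hypothesis. Hence $F_\g\cong S^M$. Now $\U_\g=\{\chi\mid\chi(S^M)\neq0\}$ is open and contains $\chi_M$, since $\chi_M(S^M)=\length_{\End M}(\End M/\rad\End M)=1$; and for indecomposable $L\in\C$ one has $\chi_L\in\U_\g$ iff $S^M(L)\neq0$ iff $L\cong M$. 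Thus $\chi_M$ is the unique finite character in the open set $\U_\g$, and since the finite characters are dense, $\U_\g\subseteq\overline{\U_\g\cap\{\text{finite characters}\}}=\overline{\{\chi_M\}}=\{\chi_M\}$ by the second preliminary. Therefore $\{\chi_M\}=\U_\g$ is open.

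For the converse, assume $\{\chi_M\}$ is open. As the sets $\U_\a$ form a basis, $\{\chi_M\}=\U_\a$ for a single morphism $\a$, so the finitely presented functor $F_\a$ has $F_\a(M)\neq0$ and $F_\a(L)=0$ for every indecomposable $L\in\C$ with $L\not\cong M$ (because each such $\chi_L$ is an irreducible character distinct from $\chi_M$). Pick a nonzero natural transformation $\eta\colon\Hom(M,-)\to F_\a$ and set $\H=\ker\eta$; this is finitely presented, not merely finitely generated, because $\Fp(\C,\Ab)$ is an abelian subcategory of the category $\mathrm{Fun}(\C,\Ab)$ of all additive functors $\C\to\Ab$ (here one uses that $\C$, being abelian, has weak kernels). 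One then checks $\H\subseteq\rad(M,-)$, with $\H(L)=\Hom(M,L)=\rad(M,L)$ whenever $L$ has no summand isomorphic to $M$, and $\H(M)\subseteq\rad\End M$. Consequently $T:=\rad(M,-)/\H$ vanishes on every indecomposable object except $M$ and has the finite-length value $T(M)=\rad\End M/\H(M)$; such a functor is finitely generated, since a finite set of $\End M$-module generators of $T(M)$ generates $T$. From the exact sequence $0\to\H\to\rad(M,-)\to T\to0$ with $\H$ finitely presented and $T$ finitely generated, $\rad(M,-)$ is finitely generated, say $\rad(M,-)=\Im(\g^*)$ for some $\g\colon M\to N$ in $\C$. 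Then $\g\in\rad(M,N)$, so $\g$ is not a split monomorphism, and any non-split-monomorphism $f\colon M\to L$ lies in $\rad(M,L)=(\Im\g^*)(L)$ and hence factors through $\g$; thus $\g$ is left almost split and $\chi=\chi_M$, as required.

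The genuinely nontrivial step is this converse: extracting the finite generation of $\rad(M,-)$ — equivalently, the existence of a left almost split morphism out of $M$ — from the bare existence of one finitely presented functor $F_\a$ supported only at $\chi_M$. The key idea is to squeeze $\rad(M,-)$ between the finitely presented subfunctor $\H=\ker\eta$ and the quotient $T$, which is supported at the single point $M$ with a finite-length value and is therefore finitely generated for soft reasons. The remaining ingredients — the extension of characters to $\Fp(\C,\Ab)$ and the formula $\chi_L(F_\a)=\length_{\End L}F_\a(L)$ (Lemma~\ref{le:top} and \cite{Kr2012}), the locality of $\End M$, the exactness of $\Fp(\C,\Ab)\hookrightarrow\mathrm{Fun}(\C,\Ab)$, and the identity $\rad(M,-)=\Im(\g^*)$ for left almost split $\g$ — are standard, and I would only need to assemble them with care.
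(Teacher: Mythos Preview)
Your proof is correct, but it diverges substantially from the paper's, especially in the harder direction.

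For the implication ``$\{\chi\}$ open $\Rightarrow$ left almost split'', the paper gives a very short pushout construction: having written $\{\chi_M\}=\U_\a$, one chooses $\b\colon X\to M$ not factoring through $\a$ (more precisely, with $[\b]$ in the $\End M$-socle of $F_\a(M)$) and checks directly that the pushout map $M\to N$ of $\a$ along $\b$ is left almost split. Your route---squeezing $\rad(M,-)$ between the finitely presented kernel $\H=\ker\eta$ and the finitely generated quotient $T$ supported only at $M$---is a genuinely different and considerably longer argument. It has the merit of making the role of $\Fp(\C,\Ab)$ explicit and of yielding the finite generation of $\rad(M,-)$ as an intermediate structural statement, but the pushout trick is both shorter and more elementary, avoiding coherence considerations for $\Fp(\C,\Ab)$ entirely.

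For the implication ``left almost split $\Rightarrow$ $\{\chi\}$ open'', your proof is actually more detailed than the paper's: the paper simply asserts that $\U_\g=\{\chi_M\}$ when $\g$ is left almost split, whereas you identify $F_\g$ with the simple functor $S^M$, observe that $\chi_M$ is the unique \emph{finite} character in $\U_\g$, and then use density of the finite characters together with your preliminary observation that every point of $\Sp\C$ is closed to rule out infinite characters in $\U_\g$. This is a clean way of handling the infinite characters, and the closedness-of-points argument via irreducibility is a nice touch not made explicit in the paper.
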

\begin{proof}
  Suppose first that $\{\chi\}=\U_\a$ for some morphism $\a\colon X\to
  Y$ in $\C$. It follows from Corollary~\ref{co:top} that
  $\chi=\chi_M$ for some indecomposable object $M$ in $\C$.  Choose a
  morphism $\b\colon X\to M$ which does not factor through $\a$ and
  form the following pushout.
\[\xymatrix{ X\ar[r]^\a\ar[d]_\b&Y\ar[d]\\ M\ar[r]&N
}\]
It is easily checked that the morphism $M\to N$ is left almost split.
Conversely, if $\a\colon M\to N$ is left almost split in $\C$, then
$\U_\a=\{\chi_M\}$.
\end{proof}

\begin{proof}[Proof of Theorem~\ref{th:main}]
  There are two cases. Suppose first that there is a simple object $S$
  that does not embed into an injective object in $\C$. Let $E$ be an
  injective envelope of $S$ in $\Lex(\C^\op,\Ab)$ and write
  $E=\bigcup_i E_i$ as the directed union of non-zero objects in
  $\C$. Note that $E$ has degree one. Each $E_i$ has a simple socle
  and is therefore indecomposable of degree one as well. On the other
  hand, the lengths of the $E_i$ are unbounded. The object $E$ yields
  an irreducible character $\chi_E$ for $\C$ which lies in the closure
  of the $\chi_{E_i}$, by Lemma~\ref{le:cl}.  Thus the space $\Sp\C$
  is indiscrete. It follows that each of (1)--(4) does not hold.

  Now suppose that $\C$ has enough injective objects. Let $E$ be an
  injective cogenerator and $A$ its endomorphism ring. Then $A$ is an
  artin algebra and the functor $\Hom(-,E)$ identifies $\C$ with the
  category of finite-length $A$-modules. Note that each indecomposable
  object in $\C$ is the source of a left almost split morphism.

  (3) $\Leftrightarrow$ (4): The finite characters in $\Sp\C$ are
  precisely the isolated points, by Lemma~\ref{le:isolated}. Thus
  $\Sp\C$ is discrete iff all irreducible characters are finite.

  (3) $\Rightarrow$ (2): The set $\V_n$ of irreducible characters of
  degree at most $n$ form a closed subset of $\Sp\C$ which is compact,
  by Lemma~\ref{le:top}. Every character in $\V_n$ is finite and
  therefore isolated, by Lemma~\ref{le:isolated}. Thus $\V_n$ is a
  finite set.

  (2) $\Rightarrow$ (3): Suppose there is an irreducible character
  $\chi$ which is not finite. Using the correspondence between
  irreducible characters for $\C$ and indecomposable endofinite
  $A$-modules \cite[\S5.3]{CB1992}, it follows that $\chi$ corresponds
  to an endofinite $A$-module which is not of finite length. This
  implies that there are infinitely many non-isomorphic indecomposable
  finite-length $A$-modules of some fixed endolength; see
  \cite[\S9.6]{CB1992}. For each $A$-module $M$ we have
\begin{equation*}\label{eq:deg}
\deg M\le\chi_M(A/\rad A)\le\chi_M(A)=\lend M.
\end{equation*}
Thus for some $n\in\bbN$ the
number of non-isomorphic indecomposable objects in $\C$ which are of
degree $n$ is infinite.

(1) $\Rightarrow$ (2): Clear.

(2) $\Rightarrow$ (1): This follows from Conjecture~\ref{co:BT}, using that
the degree of an $A$-module is bounded by its endolength.
\end{proof}

\begin{rem}
  The original notion of `strongly unbounded type' asks for infinitely
  many $n\in\bbN$ such that the number of non-isomorphic
  indecomposable objects of length $n$ is infinite \cite{Ja1957}. We
  cannot expect to have infinitely many such $n$ when \emph{length} is
  replaced by \emph{degree} as in Theorem~\ref{th:main}. To see this,
  consider the category of finitely generated torsion modules over any
  discrete valuation domain: all indecomposables are of degree one.
\end{rem}

\section{Subobject-closed subcategories}

In this section we fix a $k$-linear Hom-finite abelian category $\C$.
%having only finitely many non-isomorphic simple objects.
A full additive subcategory $\D\subseteq\C$ is called
\emph{subobject-closed} if each subobject of an object in $\D$ belongs
to $\D$. Note that in this case the inclusion $\D\to\C$ has a left
adjoint; it takes an object $X\in\C$ to $X/U$ where $U\subseteq X$ is
the minimal subobject with $X/U\in\D$.  Thus $\D$ is a category having
cokernels and we can define characters $\Ob\D\to\bbN$ as before.

\begin{lem}\label{le:sub}
  Let $\D\subseteq\C$ be a subobject-closed subcategory and $p\colon
  \C\to\D$ be  the left adjoint of the inclusion. The map
  $\Sp\D\to\Sp\C$ sending $\chi$ to $\chi\comp p$ identifies $\Sp\D$
  with the closure of $\{\chi_M\in\Sp\C\mid M\in\D\}$.
\end{lem}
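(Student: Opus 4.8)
The plan is to show that the map $\iota\colon\Sp\D\to\Sp\C$, $\chi\mapsto\chi\comp p$, is (a) well-defined and injective, (b) has image contained in the closed set $\overline{\{\chi_M\mid M\in\D\}}$, and (c) has image containing that closure. For well-definedness, I would first check that $\chi\comp p$ is again a character for $\C$: additivity is immediate since $p$ is additive, and for an exact sequence $X\to Y\to Z\to 0$ in $\C$ one applies the right-exact functor $p$ to get an exact sequence $pX\to pY\to pZ\to 0$ in $\D$, so condition~(2) transfers. To see that $\chi\comp p$ is irreducible whenever $\chi$ is, the cleanest route is to produce a left inverse: restriction along the inclusion $\D\hookrightarrow\C$. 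Since $p$ restricts to the identity on $\D$ (as $X=pX$ for $X\in\D$), we have $(\chi\comp p)|_{\Ob\D}=\chi$; and restriction of any character for $\C$ to $\D$ is again a character for $\D$, because a sequence $X\to Y\to Z\to 0$ in $\D$ is in particular such a sequence in $\C$. Thus $\iota$ is injective, and moreover any decomposition $\chi\comp p=\psi_1+\psi_2$ in characters for $\C$ restricts to a decomposition $\chi=\psi_1|_\D+\psi_2|_\D$ in characters for $\D$; irreducibility of $\chi$ then forces one summand to vanish on $\Ob\D$, and I would need to upgrade this to vanishing on all of $\Ob\C$. This last upgrade should follow because $\psi_i$ factors through $p$: for any $X\in\C$ the counit-type relation should give $\psi_i(X)$ controlled by $\psi_i(pX)$ — more precisely I expect $\psi_i\comp p\le\chi\comp p$ componentwise forces $\psi_i(U)=0$ for the kernel $U$ of $X\to pX$ and hence $\psi_i(X)=\psi_i(pX)$; this is the point I'd want to nail down carefully.

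For the topological statements, I would use the description of characters via functor categories from the proof of Lemma~\ref{le:top}. A morphism $\a\colon X\to Y$ in $\C$ has a cokernel, and $p$ being right exact sends the presentation data accordingly; the key identity to establish is $(\chi\comp p)(\a)=\chi(p\a)$, i.e. $\chi\comp p\in\U_\a$ iff $\chi\in\U_{p\a}$. Granting this, continuity of $\iota$ is immediate: $\iota^{-1}(\U_\a)=\U_{p\a}$ is open in $\Sp\D$. Conversely, every basic open $\U_\b$ of $\Sp\D$ (with $\b$ a morphism in $\D$, hence in $\C$) satisfies $\iota(\U_\b)=\iota(\Sp\D)\cap\U_\b$ — using that $p\b=\b$ up to the relevant isomorphism since $\b$ already lands in $\D$ — so $\iota$ is a homeomorphism onto its image once we know the image. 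Since $\chi_M\comp p=\chi_{M}$ as characters for $\C$ when $M\in\D$ (both compute $\length_{\End M}\Hom_\C(-,M)$, and $\Hom_\C(X,M)\cong\Hom_\D(pX,M)$ by adjunction), the finite characters coming from $\D$ lie in the image of $\iota$; by Corollary~\ref{co:top} applied to $\D$ these are dense in $\Sp\D$, so $\iota(\Sp\D)\subseteq\overline{\{\chi_M\mid M\in\D\}}$ once we check $\iota(\Sp\D)$ is contained in a closed set — which it is, being the preimage description reversed: one shows $\iota(\Sp\D)=\bigcap_{\a}\{\chi\in\Sp\C\mid \chi\in\U_\a\Rightarrow \chi\in\U_{p\a}\text{-type condition}\}$, an intersection of closed sets.

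For the reverse inclusion, that the closure of $\{\chi_M\mid M\in\D\}$ lands in $\iota(\Sp\D)$, I would argue that $\iota(\Sp\D)$ is closed in $\Sp\C$. The natural way: an irreducible character $\chi$ for $\C$ lies in $\iota(\Sp\D)$ iff it ``vanishes on $\D$-torsion'', i.e. $\chi(U)=0$ for every object $U$ admitting no nonzero map to any object of $\D$ (equivalently $pU=0$). One then shows this vanishing condition is closed: for a fixed such $U$ and any presentation-type morphism detecting it, the set $\{\chi\mid\chi(U)=0\}$ is of the form $\V_{\a,0}$ for a suitable $\a$, hence closed by Lemma~\ref{le:top}, and $\iota(\Sp\D)$ is the intersection of these over all $\D$-torsion $U$. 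Given that $\iota(\Sp\D)$ is closed and contains the dense-in-itself set $\{\chi_M\mid M\in\D\}$, it contains that set's closure in $\Sp\C$; combined with the first inclusion this gives equality. The main obstacle I anticipate is the bookkeeping around ``vanishing on $\D$-torsion'' — making precise that an irreducible character factoring through $p$ is exactly one killing the relevant subobjects, and that this is simultaneously the right closed condition — rather than any deep new idea; the functor-category reformulation from Lemma~\ref{le:top} should make each individual check routine.
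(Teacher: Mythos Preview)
The paper does not actually prove this lemma; its entire proof reads ``Adapt the proof of Proposition~2.2 in \cite{KrPr2012}.'' So your outline is supplying details the paper omits, and your overall strategy---show $\chi\mapsto\chi\comp p$ is well-defined, injective, has closed image containing $\{\chi_M\mid M\in\D\}$, and is contained in that closure---is sound. Two points deserve tightening. First, the irreducibility step you flagged has a clean finish: if $\chi\comp p=\psi_1+\psi_2$ with $\psi_1|_\D=0$, then for any $X$ the epimorphism $X\twoheadrightarrow pX$ gives $\psi_2(X)\ge\psi_2(pX)=\chi(pX)=(\chi\comp p)(X)$, forcing $\psi_1(X)\le 0$. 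Second, closedness of the image is more transparent than your $\D$-torsion description: since $\psi\in\iota(\Sp\D)$ iff $\psi(X)=\psi(pX)$ for all $X$, and $\psi(\eta_X)=\psi(X)-\psi(pX)$ for the unit $\eta_X\colon X\to pX$ (which has zero cokernel), the image is exactly $\bigcap_X\V_{\eta_X,0}$.

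The one genuine gap is your appeal to Corollary~\ref{co:top} \emph{for $\D$}. That corollary (via Lemma~\ref{le:cl}) is stated for the ambient abelian length category $\C$, and $\D$ is typically not abelian---it is only closed under subobjects and equipped with cokernels via $p$. You can avoid this entirely by arguing inside $\Sp\C$: given $\chi\in\Sp\D$, write $\chi\comp p=\chi_M$ with $M$ indecomposable endofinite in $\Lex(\C^\op,\Ab)$. The identity $\chi_M(X)=\chi_M(pX)$ means $\Hom(pX,M)\hookrightarrow\Hom(X,M)$ is an isomorphism (it is injective since $\eta_X$ is epi, and the two sides have equal $\End(M)$-length), so every morphism $X\to M$ factors through $pX$. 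Writing $M=\colim_i M_i$ with $M_i\in\C$, each structure map $M_i\to M$ then factors through $pM_i$, and one checks $M\cong\colim_i pM_i$ is a filtered colimit of objects of $\D$. Now Lemma~\ref{le:cl} (for $\C$) places $\chi_M$ in the closure of $\{\chi_N\mid N\in\D\}$, giving the inclusion you wanted without invoking density in $\Sp\D$.
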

\begin{proof}
Adapt the proof of Proposition~2.2 in \cite{KrPr2012}.
\end{proof}

A subset $\U\subseteq\Sp\C$ is said to be \emph{subobject-closed} if
$\U=\Sp\D$ for some subobject-closed additive subcategory
$\D\subseteq\C$.

The subobject-closed subcategories of $\C$ form a set which is
partially ordered by inclusion.  An intersection $\bigcap_\a\C_\a$ of
subobject-closed additive subcategories $\C_\a\subseteq\C$ is again
subobject-closed. Moreover, adapting the proof of
\cite[Corollary~2.3]{KrPr2012}, we have
\[\Sp(\bigcap_\a\C_\a)=\bigcap_\a\Sp\C_\a.\]
Thus we can define for each $\chi\in\Sp\C$ the
subobject-closed subset
\[\sub\chi=\bigcap_{\chi\in\U}\U\]
where $\U$ runs through all subobject-closed subsets
$\U\subseteq\Sp\C$.

\begin{thm}\label{th:sub}
Let $\chi,\psi$ be irreducible characters for $\C$. Then
\[\sub\chi=\sub\psi\quad\iff\quad\chi=\psi.\]
\end{thm}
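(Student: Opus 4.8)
The implication $\chi=\psi\Rightarrow\sub\chi=\sub\psi$ is immediate from the definition of $\sub\chi$ as an intersection over subobject-closed subsets containing $\chi$. So the plan is to prove the forward direction: if $\sub\chi=\sub\psi$ then $\chi=\psi$. Equivalently, and this is the reformulation I would use, it suffices to show that $\sub\chi$ is the \emph{smallest} subobject-closed subset of $\Sp\C$ containing $\chi$ in a strong sense --- namely that $\chi$ is the unique ``generic'' point of $\sub\chi$, so that $\sub\chi$ cannot equal $\sub\psi$ for any $\psi\neq\chi$. Concretely, I would aim to show: $\chi$ lies in the closure of every non-empty open subset of $\sub\chi$ (where $\sub\chi$ carries the subspace topology, which by Lemma~\ref{le:sub} is $\Sp\D$ for the appropriate $\D$), and $\chi$ is the only point of $\Sp\C$ with this property relative to $\sub\chi$. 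If $\chi$ and $\psi$ both have this property for the same set $\sub\chi=\sub\psi$, uniqueness forces $\chi=\psi$.

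To execute this, first I would pass via Lemma~\ref{le:sub} to the subobject-closed subcategory $\D\subseteq\C$ with $\Sp\D=\sub\chi$ and the left adjoint $p\colon\C\to\D$, so that $\chi=\chi_0\comp p$ for a unique $\chi_0\in\Sp\D$. Next, using Crawley-Boevey's theorem, represent $\chi_0$ as $\chi_M$ for an indecomposable endofinite object $M$ in $\A_\D=\Lex(\D^\op,\Ab)$; write $M=\colim_i M_i$ as a filtered colimit of objects of $\D$. By Lemma~\ref{le:cl} applied inside $\D$, $\chi_0$ lies in the closure of the finite characters $\chi_N$ with $N$ a summand of some $M_i$; combined with Corollary~\ref{co:top} this shows the finite characters are dense in $\sub\chi$. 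The key point I want is a \emph{minimality} statement: I would argue that every non-empty subobject-closed subset $\U\subseteq\sub\chi$ must contain $\chi$. Indeed if $\chi_N\in\U$ for some $N\in\D$, then $\add N$ generates a subobject-closed subcategory whose closure of characters contains $\chi$ precisely because $N$ occurs (as a summand of some $M_i$) in a colimit presentation of $M$ --- this is exactly the mechanism of Lemma~\ref{le:cl}, run backwards through the functor $F$. So no proper subobject-closed subset of $\sub\chi$ contains $\chi$, i.e. $\sub\chi$ is the unique minimal subobject-closed subset containing $\chi$.

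Granting that, the conclusion is formal: if $\sub\chi=\sub\psi=:\U$, then $\U$ is a subobject-closed subset containing $\psi$, hence by minimality of $\sub\psi$ and the fact that $\sub\psi\subseteq\U$ with $\U$ minimal for $\psi$, we get $\U=\sub\psi$ --- which we already knew --- so instead I would argue symmetrically: $\sub\chi$ contains $\psi$ (since $\psi\in\sub\psi=\sub\chi$), and $\sub\psi$ is the smallest subobject-closed subset containing $\psi$, forcing $\sub\psi\subseteq\sub\chi$; by symmetry equality. To upgrade this to $\chi=\psi$ I use that $\sub\chi$ has a \emph{unique} point $\xi$ with the property that $\sub\xi=\sub\chi$; both $\chi$ and $\psi$ have this property, so $\chi=\psi$. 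Establishing this uniqueness is where I expect the real work to be, and I would derive it from the fact that $\chi_0$ is the unique indecomposable endofinite object of $\A_\D$ that is ``$\colim$-cofinal'' among objects of $\D$, i.e. from Crawley-Boevey's bijection together with the observation that $\D$ has a unique ``largest'' such colimit up to the character it determines.

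\textbf{Main obstacle.} The delicate step is the backward use of Lemma~\ref{le:cl}: showing that if $\D'\subseteq\D$ is subobject-closed and $\Sp\D'\ni\chi$, then already $\D'=\D$ --- or rather that $\chi\in\Sp\D'$ forces every $N$ occurring in a colimit presentation of $M$ to lie in $\D'$ up to the character. One must rule out the possibility that $\chi$ arises as a character of a ``smaller'' colimit living in a proper subcategory, and this requires knowing that the colimit presentation $M=\colim_i M_i$ can be chosen so that the summands $N$ are, in a precise sense, forced by $\chi$ alone. I would extract this from the structure of the minimal injective or the defining functors, following the argument pattern of \cite[Prop.~2.2]{KrPr2012} that underlies Lemma~\ref{le:sub}; making that pattern yield uniqueness of the generic point, rather than just the identification of $\Sp\D$, is the crux.
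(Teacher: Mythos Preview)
Your proposal has a genuine gap. The ``key point'' you assert --- that every non-empty subobject-closed subset $\U\subseteq\sub\chi$ must contain $\chi$ --- is false. For any indecomposable $N\in\C$ with $\chi_N\in\sub\chi$ and $\sub N\subsetneq\sub M$ (where $\chi=\chi_M$), the set $\Sp(\sub N)$ is a non-empty subobject-closed subset of $\sub\chi$ that does \emph{not} contain $\chi$: if it did, the defining minimality of $\sub\chi$ would force $\sub\chi\subseteq\Sp(\sub N)$, a contradiction. The justification you sketch (``$N$ occurs as a summand of some $M_i$, so by Lemma~\ref{le:cl} run backwards $\chi$ lies in the closure generated by $N$'') misreads that lemma: it places $\chi_M$ in the closure of the \emph{entire} set of summands of \emph{all} the $M_i$, not in the closure of any single $\chi_N$ or of $\Sp(\sub N)$. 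More seriously, the ``uniqueness of the generic point of $\sub\chi$'' that you invoke at the end is precisely the statement to be proved, so the argument closes in a circle.

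What is missing is a concrete mechanism to compare the endofinite objects $M$ and $N$ representing $\chi$ and $\psi$. The paper first identifies $\sub\chi_M=\Sp(\sub M)$, where $\sub M\subseteq\C$ is the category of finite-length subobjects of finite coproducts of copies of $M$; the hypothesis then becomes $\sub M=\sub N$. Writing $M=\colim_i M_i$ with $M_i\in\sub M\subseteq\sub N$ and embedding each $M_i$ into a finite coproduct of copies of $N$ produces a monomorphism $M\hookrightarrow\coprod_I N$ (using that coproducts of an endofinite object are closed under products and filtered colimits). By symmetry one composes to a monomorphism $\p\colon M\to\coprod M$ each of whose components factors as $M\to N\to M$. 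The decisive input is then a lemma you never approach: for indecomposable endofinite $M$, any monomorphism $M\to\coprod_I M$ has an invertible component, because $\End(M)$ is local with $\bigcap_n\rad^n\End(M)=0$. Hence some $M\to N\to M$ is an isomorphism and $M\cong N$. This ring-theoretic fact about $\End(M)$ is the real content; density and minimality considerations alone do not yield it.
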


The proof is based on the following lemma.

\begin{lem}\label{le:sub}
Let $M$ be an indecomposable endofinite object in $\A=\Lex(\C^\op,\Ab)$
and $\p\colon M\to \coprod_I M$ a monomorphism. Then at least one
component of $\p$ is invertible.
\end{lem}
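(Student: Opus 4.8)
The plan is to exploit the fact that $M$ is endofinite, hence its character $\chi_M$ takes finite values, together with the defining additivity/subadditivity properties of characters. First I would recall that for an endofinite indecomposable object $M$ in $\A$, the endomorphism ring $\End(M)$ is local, and $\Hom(-,M)$ viewed with its $\End(M)$-module structure has finite length on each object of $\C$; this is exactly the content that makes $\chi_M(X)=\length_{\End(M)}\Hom(X,M)$ well-defined and finite. The key observation is that a monomorphism $\p\colon M\to\coprod_I M$ must ``factor through a finite sub-coproduct'' in a suitable sense: although $M$ need not be finitely presented in $\A$, it \emph{is} endofinite, and one can test the monomorphism on objects $X\in\C$ (which generate $\A$ under filtered colimits). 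For each $X\in\C$ the induced map $\Hom(X,M)\to\Hom(X,\coprod_I M)=\coprod_I\Hom(X,M)$ is a monomorphism of finite-length $\End(M)$-modules, so its image lies in a finite sub-coproduct; but $\coprod_I\Hom(X,M)$ surjects onto each coordinate, and finite length forces the relevant index set to be genuinely finite for a cofinal system of $X$.

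The heart of the argument is then a counting/length estimate. Restrict attention to a single simple object $S$ of $\C$ so that $\chi_M(S)=\length_{\End(M)}\Hom(S,M)=:d\geq 1$ (such $S$ exists since $M\neq 0$). Applying $\Hom(S,-)$ to $\p$ gives a monomorphism of $\End(M)$-modules $\Hom(S,M)\hookrightarrow\coprod_I\Hom(S,M)$, i.e.\ an $\End(M)$-linear monomorphism $D\hookrightarrow D^{(I)}$ where $D:=\Hom(S,M)$ has length $d$. Composing with the projections $\pi_i$, at least the images must jointly separate points, and a length count over the local ring $\End(M)$ shows that the composite $D\to D^{(I)}\to D$ along some coordinate $i_0$ must be injective, hence (as $D$ has finite length) an isomorphism. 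The point is to upgrade this coordinatewise injectivity from the level of $\Hom(S,-)$ to the level of $M$ itself: since $\pi_{i_0}\comp\p\colon M\to M$ induces an isomorphism on $\Hom(S,-)$, and $M$ is endofinite indecomposable so $\End(M)$ is local, the endomorphism $\pi_{i_0}\comp\p$ is not in $\rad\End(M)$ (it is nonzero after applying $\Hom(S,-)$, which kills radical elements on a module whose socle involves $S$), hence it is invertible.

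The main obstacle I anticipate is precisely this last upgrade step: showing that a component $\pi_{i_0}\comp\p\in\End(M)$ which is an isomorphism after applying some $\Hom(S,-)$ must already be an isomorphism. This needs the fact that $\rad\End(M)$ acts as expected on the ``socle data'' of the endofinite module $M$ — concretely, that an endomorphism of $M$ lying in the radical induces a non-surjective, hence non-injective, map on $\Hom(S,M)$ for a suitable simple $S$. I would extract this from Crawley-Boevey's structure theory of endofinite objects (the decomposition of $M$ into isotypic parts over $\End(M)/\rad\End(M)$, see \cite{CB1992,CB1994b}), or alternatively argue directly: if every component $\pi_i\comp\p$ lay in $\rad\End(M)$, then since $\p$ is a monomorphism one could build an infinite strictly ascending chain in the finite-length $\End(M)$-module $\Hom(X,M)$ for suitable $X$, a contradiction. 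Either route is short once the finite-length bookkeeping over the local ring $\End(M)$ is set up correctly.
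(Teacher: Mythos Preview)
Your strategy is genuinely different from the paper's and it can be made to work, but the step you flag as the ``main obstacle'' is in fact the easy part, and you have not stated it correctly. Here is the clean version: write $E=\End(M)$ and $D=\Hom_\A(S,M)$ for a simple $S$ with $D\neq 0$ (such $S$ exists since $\chi_M(X)\le\sum_j\chi_M(S_j)$ over the composition factors of any $X$). Because $S\in\C$ is finitely presented in $\A=\Ind\C$, applying $\Hom_\A(S,-)$ to $\p$ gives an $E$-linear monomorphism $D\hookrightarrow\coprod_I D$, $d\mapsto(\p_i d)_i$. Now $D$ is a nonzero finite-length module over the local ring $E$, so its $E$-socle is nonzero; if every $\p_i$ lay in $\rad E$ then every socle element would map to $0$, contradicting injectivity. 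Hence some $\p_{i_0}\notin\rad E$ is a unit. There is no separate ``upgrade'' step, and no ascending-chain argument is needed; your phrase ``a length count shows some coordinate is injective'' and the parenthetical about $\Hom(S,-)$ ``killing radical elements'' are both imprecise --- the point is that radical elements kill the socle of $D$, and Nakayama (or simply $\p_{i_0}D=D$) finishes it.

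By contrast, the paper does not test on a single simple object. It iterates $\p$ to obtain $\bar\p\colon M\to\coprod_J M$ as the colimit of $M\to\coprod_I M\to\coprod_{I^2}M\to\cdots$, observes that the components of $\bar\p$ are arbitrarily long products of the $\p_i$, and invokes the structural fact that $\bigcap_{n\ge 0}\rad^n E=0$ (because $M$ is an indecomposable injective in a locally finite abelian category, via \cite{Ga1962}). If all $\p_i\in\rad E$ then $\bar\p=0$, contradicting that a filtered colimit of monomorphisms is a monomorphism. Your argument is more elementary in that it uses only the finite $E$-length of $\Hom(S,M)$ and avoids both the iteration trick and the vanishing of $\bigcap_n\rad^n E$; the paper's argument, on the other hand, never needs to produce a specific $S$ with $\Hom(S,M)\neq 0$ or to verify that $S$ is finitely presented in $\A$.
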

\begin{proof}
Consider in $\A$ the colimit of the chain of monomorphisms
\[M\stackrel{ \p }\lto\coprod_{I}M\xto{\coprod_I\p}\coprod_{I^2}M
\xto{\coprod_I\coprod_I\p}\coprod_{I^3}M
\xto{\coprod_I\coprod_I\coprod_I\p}\cdots\] and write this as morphism
$\bar\p\colon M\to \coprod_J M$. Denote by $E$ the endomorphism ring
of $M$; it is a local ring with $\bigcap_{n\ge 0}\rad E=0$. This
follows from \cite[Prop.~IV.14]{Ga1962} since $M$ can be identified
with an indecomposable injective object of a locally finite abelian
category; see \cite[Proposition~B.2]{Kr2012}. If each component of
$\p$ belongs to $\rad E$, then each component of $\bar\p$ belongs
$\bigcap_{n\ge 0}\rad E$. Thus there is one component of $\p$ which is
invertible.
\end{proof}

\begin{proof}[Proof of Theorem~\ref{th:sub}]
  As before, we work in the category $\A= \Lex(\C^\op,\Ab)$. For
  $M\in\A$ let $\sub M$ denote the subobject-closed subcategory of
  $\C$ consisting of all finite-length subobjects of finite coproducts
  of copies of $M$. Note that $M$ is a filtered colimit of objects in
  $\sub\M$.  

  Now fix indecomposable endofinite objects $M$ and $N$ in
  $\A$. Then \[\sub\chi_M=\Sp(\sub M)\] and therefore
\[\sub\chi_M\subseteq\sub\chi_N\quad\iff\quad\sub M\subseteq\sub N.\]
Suppose that $\sub\chi_M\subseteq\sub\chi_N$. Write $M=\colim_i M_i$
as a filtered colimit of finite-length objects and choose for each $i$
a monomorphism $\a_i\colon M_i\to N_i$ into a finite coproduct of
copies of $N$. Note that the coproducts of copies of $N$ form a
subcategory of $\A$ which is closed under filtered colimits and
products, since $N$ is endofinite; see \cite[\S3]{CB1994b} or
\cite[Corollary~10.5]{Kr1998b}.  Thus the $\a_i$ induce a
monomorphism \[M=\colim_i M_i\lto\colim_i(\prod_{i\to j}N_j)=\coprod_I
N\] for some index set $I$.

Now suppose that $\sub\chi_M=\sub\chi_N$. Thus there is also a
monomorphism $N\to \coprod_J M$ for some index set $J$. Composing
these morphisms yields a monomorphism \[\p\colon M\lto
\coprod_I\coprod_J M.\] Each component of $\p$ is an
endomorphism of $M$ that factors through a coproduct of copies of $N$,
and it follows from Lemma~\ref{le:sub} that at least one component is
invertible.  Thus $M\cong N$ and therefore $\chi_M=\chi_N$.
\end{proof}

\begin{cor}\label{co:poset}
The set $\Sp\C$ is partially ordered by 
\[\chi\subseteq \psi\quad\iff\quad \sub\chi\subseteq\sub\psi.\]
In particular, each  $\chi\in\Sp\C$ is uniquely
determined by the set of finite characters in $\sub\chi$.
\end{cor}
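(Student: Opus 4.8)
The plan is to deduce both assertions from Theorem~\ref{th:sub} together with the identification of $\Sp\D$ inside $\Sp\C$ provided by Lemma~\ref{le:sub}. For the first assertion, observe that the relation $\chi\subseteq\psi\iff\sub\chi\subseteq\sub\psi$ is reflexive and transitive for the trivial reason that set inclusion is; the only nontrivial point is antisymmetry, and this is precisely Theorem~\ref{th:sub}: if $\sub\chi\subseteq\sub\psi$ and $\sub\psi\subseteq\sub\chi$ then $\sub\chi=\sub\psi$, whence $\chi=\psi$. So $\subseteq$ is a partial order on $\Sp\C$.

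For the second assertion, let $S_\chi\subseteq\Sp\C$ denote the set of finite characters lying in $\sub\chi$. Since $\chi\mapsto\sub\chi$ is injective by Theorem~\ref{th:sub}, it suffices to recover the subset $\sub\chi$ from $S_\chi$. Using the formula $\Sp(\bigcap_\a\C_\a)=\bigcap_\a\Sp\C_\a$ preceding Theorem~\ref{th:sub}, we may write $\sub\chi=\Sp\D$ for a subobject-closed subcategory $\D\subseteq\C$ (concretely $\D=\sub M$ when $\chi=\chi_M$ with $M$ the corresponding indecomposable endofinite object of $\A$, but this explicit description is not needed). By Lemma~\ref{le:sub}, the subset $\Sp\D$ is the closure in $\Sp\C$ of $\{\chi_X\mid X\in\D\}$ under the identification $\chi'\mapsto\chi'\comp p$, where $p\colon\C\to\D$ is the left adjoint of the inclusion; in particular $\sub\chi$ is a closed subset.

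It remains to note that each element $\chi_X$ with $X\in\D$, viewed in $\Sp\C$, is a finite character: under the identification it goes to $\chi_X\comp p$, and since $\Hom_\D(pY,X)\cong\Hom_\C(Y,X)$ for $X\in\D$ this equals the genuine character $\length_{\End(X)}\Hom_\C(-,X)$ of the object $X\in\C$. Hence $\{\chi_X\mid X\in\D\}\subseteq S_\chi\subseteq\sub\chi$, and passing to closures while using that $\sub\chi$ is closed gives $\sub\chi=\overline{\{\chi_X\mid X\in\D\}}\subseteq\overline{S_\chi}\subseteq\sub\chi$, so $\sub\chi=\overline{S_\chi}$. Thus $S_\chi$ determines $\sub\chi$ and therefore, by Theorem~\ref{th:sub}, determines $\chi$. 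The only subtle points are the compatibility of $\chi_X$ with the adjunction $p$ (so that objects of $\D$ really do contribute \emph{finite} characters of $\C$) and the fact, supplied by Lemma~\ref{le:sub}, that these finite characters are dense in $\sub\chi$; beyond this the argument is formal.
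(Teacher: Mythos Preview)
Your proof is correct and follows essentially the same route as the paper: antisymmetry from Theorem~\ref{th:sub} for the first assertion, and for the second, the observation (via Lemma~\ref{le:sub}) that $\sub\chi$ is the closure of its finite characters, so that this set determines $\sub\chi$ and hence $\chi$. Your additional verification that the identification $\Sp\D\hookrightarrow\Sp\C$ carries $\chi_X$ for $X\in\D$ to the genuine finite character $\chi_X\in\Sp\C$ (via the adjunction $\Hom_\D(pY,X)\cong\Hom_\C(Y,X)$) is a useful detail the paper leaves implicit, but it does not change the strategy.
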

\begin{proof}
  The first part is clear from Theorem~\ref{th:sub}. For the second
  part, observe that $\sub\chi$ is the closure of the subset of finite
  characters in $\sub\chi$ by Lemma~\ref{le:sub}.
\end{proof}

Let us rephrase Theorem~\ref{th:sub} for module categories, using the
correspondence between characters and endofinite modules
\cite[\S5.3]{CB1992}. For a module $M$, let $\sub M$ denote the
category consisting of all finite-length submodules of finite
direct sums of copies of $M$.

\begin{cor}
  Two indecomposable endofinite modules $M$ and $N$ over an artin
  algebra are isomorphic if and only if $\sub M=\sub N$.\qed
\end{cor}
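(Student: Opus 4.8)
The plan is to transport Theorem~\ref{th:sub} to module categories along Crawley-Boevey's dictionary between characters and endofinite modules. Put $\C=\mod A$, with $k$ the ground ring over which $A$ is an artin algebra; then $\C$ is a $k$-linear Hom-finite abelian category with only finitely many simple objects, so the entire set-up of this note applies. Since $A$ is artinian, finitely generated, finitely presented and finite-length $A$-modules all coincide, and hence the Grothendieck category $\A=\Lex(\C^\op,\Ab)$ is equivalent to $\Mod A$, with $\C$ its full subcategory of finite-length objects. Under this equivalence an object of $\A$ is endofinite exactly when it is an endofinite $A$-module, finite coproducts in $\A$ are finite direct sums of modules, and subobjects computed in $\A$ are just submodules; so for an indecomposable endofinite $A$-module $M$ the subobject-closed subcategory of $\C$ used in the proof of Theorem~\ref{th:sub} --- the finite-length subobjects of finite coproducts of copies of $M$ --- is precisely the category $\sub M$ of the corollary.

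With these identifications in hand, I would first invoke the bijection $M\mapsto\chi_M$ between isomorphism classes of indecomposable endofinite $A$-modules and irreducible characters for $\C$ (\cite[\S5.3]{CB1992}, or the theorem of Crawley-Boevey recalled above), so that $M\cong N$ if and only if $\chi_M=\chi_N$. Next I would quote from the proof of Theorem~\ref{th:sub} the identity $\sub\chi_M=\Sp(\sub M)$ together with the equivalence $\sub M\subseteq\sub N\iff\sub\chi_M\subseteq\sub\chi_N$; applying this in both directions gives $\sub M=\sub N$ if and only if $\sub\chi_M=\sub\chi_N$. Finally Theorem~\ref{th:sub} itself yields $\sub\chi_M=\sub\chi_N\iff\chi_M=\chi_N$. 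Stringing these together,
\[\sub M=\sub N\iff\sub\chi_M=\sub\chi_N\iff\chi_M=\chi_N\iff M\cong N,\]
which is the claim.

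No step here is genuinely hard: the corollary is simply Theorem~\ref{th:sub} read through the character--module correspondence. The one point that deserves care is the bookkeeping of the first paragraph, where one must check that the equivalence $\A\simeq\Mod A$ really does match ``endofinite object of $\A$'' with ``endofinite $A$-module'' and the functor-theoretic notion of $\sub M$ with the module-theoretic one; this is exactly the place where the hypothesis that $A$ is an artin algebra --- so that the relevant finiteness conditions on modules coincide --- enters.
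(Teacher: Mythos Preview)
Your proposal is correct and follows exactly the route the paper intends: the corollary is stated immediately after the sentence ``Let us rephrase Theorem~\ref{th:sub} for module categories, using the correspondence between characters and endofinite modules'' and is given no proof beyond a \qed. You have simply spelled out that rephrasing in detail, including the identification $\A\simeq\Mod A$ and the matching of the two notions of $\sub M$.
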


\section{Compactness}

We keep the setting of the previous section and discuss the
compactness of $\Sp\C$. More specifically, the fact that each
irreducible character is determined by a subobject-closed subset of
$\Sp\C$ raises the question when a subset of $\Sp\C$ is of the form
$\sub\chi$ for some character $\chi$. We have the following criterion.

\begin{prop}\label{pr:sub}
  Suppose that each indecomposable object in $\C$ is the source of a
  left almost split morphism.  Let $\U\subseteq\Sp\C$ be an infinite
  subobject-closed subset which is minimal with respect to this
  property. Then the following conditions are equivalent.
\begin{enumerate}
\item $\U$ is compact.
\item $\U$ contains an infinite character.
\item $\U$ is of the form $\sub\chi$ for some  character $\chi$.
\end{enumerate}
\end{prop}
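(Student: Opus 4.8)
The plan is to prove the cycle of implications $(3)\Rightarrow(1)\Rightarrow(2)\Rightarrow(3)$, using the structure theory developed in the previous sections together with the minimality hypothesis on $\U$.

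\medskip

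\emph{Step 1: $(3)\Rightarrow(1)$.} Suppose $\U=\sub\chi$. By Lemma~\ref{le:sub} (the variant with characters), $\sub\chi=\Sp(\sub M)$ where $M$ is the indecomposable endofinite object of $\A$ corresponding to $\chi$. By Lemma~\ref{le:top}, the subset $\V_n\subseteq\Sp\C$ is closed and compact for each $n$; intersecting with the closed set $\U$ shows $\U\cap\V_n$ is compact. I would then argue that $\U$ is itself the union over $n$ of these compact pieces, but that alone does not give compactness — instead, the key point is that $\sub\chi$ is quasi-compact: any basic open cover $\U\subseteq\bigcup_\a\U_{\a}$ corresponds, via the presentation functors $F_\a$ on $\A$, to a family of functors that does not vanish simultaneously at $M$, hence (since $M$ is a filtered colimit of objects in $\sub M$, and the $F_\a$ commute with filtered colimits, exactly as in Lemma~\ref{le:cl}) some finite subfamily already covers. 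This is essentially the argument that a closed subset of the form $\Sp(\sub M)$ inherits the compactness behaviour of $\V_n$ once one knows the relevant functors are finitely presented; I expect to invoke Lemma~C.1 of \cite{Kr2012} here as in the proof of Lemma~\ref{le:top}.

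\medskip

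\emph{Step 2: $(1)\Rightarrow(2)$.} Suppose $\U$ is compact. By Corollary~\ref{co:top} the finite characters are dense in $\Sp\C$, and since $\U$ is subobject-closed (hence closed), the finite characters lying in $\U$ are dense in $\U$. If $\U$ contained no infinite character, then every point of $\U$ would be finite, hence — under the standing hypothesis that each indecomposable is the source of a left almost split morphism — isolated in $\Sp\C$ by Lemma~\ref{le:isolated}, so isolated in $\U$. A compact space with all points isolated is finite, contradicting that $\U$ is infinite. Hence $\U$ contains an infinite character.

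\medskip

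\emph{Step 3: $(2)\Rightarrow(3)$.} This is where the minimality hypothesis does the work, and I expect it to be the main obstacle. Suppose $\chi\in\U$ is infinite. Then $\sub\chi$ is an infinite subobject-closed subset of $\Sp\C$: it is subobject-closed by definition of $\sub\chi$ and the remarks preceding Theorem~\ref{th:sub}, and it is infinite because a finite subobject-closed subset consists of isolated points (as in Step 2) and hence all its characters are finite, by Lemma~\ref{le:isolated}, whereas $\chi\in\sub\chi$ is infinite. Since $\chi\in\U$ we have $\sub\chi\subseteq\U$, and by minimality of $\U$ among infinite subobject-closed subsets we conclude $\U=\sub\chi$. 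The delicate point to get right is that $\sub\chi$ really is \emph{strictly} contained in — or equal to — $\U$ as a subobject-closed subset, i.e.\ that $\sub\chi$ is genuinely one of the sets over which minimality is tested; this is immediate from the definition of $\sub\chi$ as an intersection of subobject-closed subsets and the fact, noted before Theorem~\ref{th:sub}, that such intersections are again subobject-closed and that $\Sp$ commutes with them. One must also make sure that when $\chi$ is merely a character (not assumed irreducible in item (3)), $\sub\chi$ still makes sense and agrees with $\Sp(\sub M)$ for the corresponding endofinite object $M$; decomposing $M$ into indecomposable endofinite summands and using that $\sub$ of a finite direct sum is generated by the summands handles this.
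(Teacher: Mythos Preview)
Your arguments for $(1)\Rightarrow(2)$ and $(2)\Rightarrow(3)$ are correct and match the paper's. The gap is in $(3)\Rightarrow(1)$.

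You try to prove that $\sub\chi$ is compact \emph{without using the minimality of $\U$}. This cannot work as written, and indeed the paper later lists ``When is a subset of the form $\sub\chi$ compact?'' as an open question. Concretely, your sentence ``any basic open cover $\U\subseteq\bigcup_\a\U_\a$ corresponds \dots\ to a family of functors that does not vanish simultaneously at $M$'' is not what the cover gives you: the cover says that for \emph{every} $\psi\in\U$ some $F_\a$ is nonzero at the corresponding object, not merely at $M$. Knowing that one $F_\a(M)\neq 0$ and that $M=\colim_i M_i$ only tells you that the single point $\chi_M$ (and some nearby $\chi_{M_i}$) lies in one $\U_\a$; it says nothing about covering the rest of $\U$ with finitely many sets. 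The appeal to Lemma~C.1 of \cite{Kr2012} does not rescue this, because that lemma gives compactness of $\V_n$, and $\sub\chi$ is not of that form.

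The paper's proof of $(3)\Rightarrow(1)$ uses minimality in an essential way. One shows that \emph{every infinite closed subset $\V\subseteq\U$ contains $\chi$}; compactness then follows, since any open containing $\chi$ has finite complement in $\U$. To see $\chi\in\V$, minimality forces the subobject-closed subcategory generated by $\V$ to be all of $\D=\sub M$. Writing $M=\colim_i M_i$ with $M_i\in\D$, each $M_i$ then embeds into a finite coproduct $N_i$ of indecomposables whose characters lie in $\V$, and each $N_i$ in turn embeds into a finite coproduct of copies of $M$. Passing to the colimit as in the proof of Theorem~\ref{th:sub} gives a monomorphism $M\to\coprod_I M$ factoring through products/colimits of the $N_i$; by Lemma~\ref{le:sub} one component is invertible, and Lemma~\ref{le:closure} then places $\chi$ in $\V$. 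You should rework Step~1 along these lines, using minimality rather than attempting a direct compactness argument for $\sub\chi$.
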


We need the following lemma.

\begin{lem}\label{le:closure}
  Let $\U\subseteq \Sp\C$ and $\chi\in\Sp\C$. Choose indecomposable
  endofinite objects $(M_i)_{i\in I}$ and $M$ in $\A=\Lex(\C^\op,\Ab)$
  such that $\U=\{\chi_{M_i}\mid i\in I\}$ and $\chi=\chi_M$. Then the
  following conditions are equivalent.
\begin{enumerate}
\item The character $\chi$ belongs to the closure of $\U$.
\item The object $M$ belongs to the smallest subcategory of $\A$ closed under
  products, filtered colimits, pure subobjects, and 
containing all $M_i$.
\end{enumerate}
\end{lem}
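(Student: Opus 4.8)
The plan is to prove Lemma~\ref{le:closure} by relating closure in $\Sp\C$ to membership in a definable-type subcategory of $\A$, using the dictionary between morphisms $\a$ in $\C$ and coherent functors $F$ on $\A$ that was set up in the proof of Lemma~\ref{le:top}. Recall from there that $\chi_M\in\U_\a$ iff $F(M)\neq 0$, where $F$ is the functor on $\A$ with presentation $\Hom(Y,-)\to\Hom(X,-)\to F\to 0$ attached to $\a\colon X\to Y$. So $\chi=\chi_M$ lies in the closure of $\U=\{\chi_{M_i}\}$ precisely when: for every coherent $F\colon\A\to\Ab$ with $F(M)\neq 0$, there is some $i$ with $F(M_i)\neq 0$. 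Equivalently, the set of coherent functors annihilating all the $M_i$ is contained in the set of coherent functors annihilating $M$. This reformulation is the heart of the argument and makes both implications tractable.

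For (2) $\Rightarrow$ (1): let $\B$ be the smallest subcategory of $\A$ closed under products, filtered colimits, and pure subobjects and containing all the $M_i$. One checks that the class of functors vanishing on every object of $\B$ coincides with the class vanishing on all $M_i$; this is because a coherent functor $F\colon\A\to\Ab$ commutes with filtered colimits and products by construction, and vanishing is inherited by pure subobjects (a pure monomorphism $M'\to M''$ stays monic after applying $F$, since $F$ is a cokernel of a map of representables and representables take pure monos to monos). Hence if $M\in\B$ and $F$ kills all $M_i$, then $F$ kills $M$; by the reformulation above $\chi_M$ is in the closure of $\U$.

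For (1) $\Rightarrow$ (2): suppose $M\notin\B$. I want to produce a coherent functor $F$ on $\A$ with $F(M)\neq 0$ but $F(M_i)=0$ for all $i$, which by the reformulation shows $\chi_M\notin\overline{\U}$. The natural move is to pass to the quotient. The subcategory $\B$, being closed under products, filtered colimits and pure subobjects, should be a definable subcategory of $\A$ in the sense of Crawley-Boevey, hence the Serre subcategory $\Sc$ of $\Fp(\A,\Ab)$ consisting of coherent functors vanishing on $\B$ has the property that a coherent functor $F$ vanishes on $\B$ iff $F\in\Sc$, and the quotient functor $\Fp(\A,\Ab)\to\Fp(\A,\Ab)/\Sc$ detects objects of $\A$ not in $\B$: concretely, $M\notin\B$ forces the functor $\Hom_\A(-,M)|_{\text{coherent}}$, or rather the cokernel functor associated to a morphism into $M$ (the object $M$ is a filtered colimit of $M_i$? no — use that $M$ itself is endofinite so $\Hom(-,M)$ restricted to $\C$ is coherent on $\A$), to be nonzero modulo $\Sc$, i.e. there is a coherent $G\notin\Sc$ with a map to that functor... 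The cleanest formulation: since $\B$ is closed under the four operations and $M\notin\B$, there exists a coherent functor $F$ on $\A$ with $F|_\B=0$ and $F(M)\neq 0$ — this is exactly the statement that $\B$ is a definable subcategory cut out by the vanishing of coherent functors, and it is the content I would cite from the theory in \cite{Kr1998} or \cite{CB1994b} rather than reprove.

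The main obstacle is the (1) $\Rightarrow$ (2) direction, specifically verifying that $\B$ is a \emph{definable} subcategory — i.e. that it is cut out by vanishing of coherent functors — so that $M\notin\B$ really does yield a separating coherent functor. The closure conditions (products, filtered colimits, pure subobjects) are precisely the standard axioms for a definable subcategory, so this should follow from Crawley-Boevey's characterization of definable subcategories in a locally coherent (here even locally finite) Grothendieck category; I would invoke \cite{CB1994b} together with \cite[Appendix]{Kr2012}. Once that is in place both directions are short, using only the functor–morphism dictionary from Lemma~\ref{le:top} and the fact that coherent functors on $\A$ commute with products and filtered colimits and send pure monomorphisms to monomorphisms.
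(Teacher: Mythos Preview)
Your argument is correct and is precisely the content of the results the paper cites: the paper's own proof is the single line ``This follows from Corollary~4.6 and Theorem~6.2 in \cite{Kr1998b}'', and what you have written is an accurate sketch of why those results give the lemma (closure in the Ziegler topology is governed by vanishing of coherent functors, and a subcategory closed under products, filtered colimits and pure subobjects is exactly one cut out by such vanishing conditions). One small quibble: coherent functors do commute with products here, but this is not quite ``by construction''---it is a (standard) fact about fp-functors on a locally finitely presented category that deserves a word of justification or a reference, just as you do for the definability of~$\B$.
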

\begin{proof}
This follows from Corollary~4.6 and Theorem~6.2 in \cite{Kr1998b}.
\end{proof}

\begin{proof}[Proof of Proposition~\ref{pr:sub}]
  As before, we work in the category $\A= \Lex(\C^\op,\Ab)$. We denote
  by $\D$ the subobject-closed subcategory of $\C$ such that $\U=\Sp\D$.

(1) $\Rightarrow$ (2): Each finite character in $\Sp\C$ is isolated,
by Lemma~\ref{le:isolated}. It follows that $\U$ contains an infinite
character when $\U$ is infinite and compact.

(2) $\Rightarrow$ (3): Let $\chi\in\U$ be infinite. Then
$\sub\chi\subseteq\U$, and the minimality of $\U$ implies equality.

(3) $\Rightarrow$ (1): Let $\U=\sub\chi$ with $\chi=\chi_M$ for some
indecomposable endofinite object $M$ in $\A$.  It suffices to show
that each infinite closed subset $\V\subseteq\U$ contains $\chi$. Thus
we choose such an infinite subset $\V$. The minimality of $\U$ implies
that the smallest subobject-closed subset of $\Sp\C$ containing $\V$
coincides with $\U$. Write $M=\colim_i M_i$ as a filtered colimit of
objects in $\D$. It follows that for each $i$ there is a monomorphisms
$M_i\to N_i$ with $N_i$ a finite coproduct of indecomposable objects
$N\in\C$ such that $\chi_N\in\V$. There are also monomorphisms $N_i\to
P_i$ such that each $P_i$ is a finite coproduct of copies of $M$,
since $N_i\in\D=\sub M$.  These monomorphisms induce a pair of
monomorphisms \[M=\colim_i M_i\lto\colim_i(\prod_{i\to
  j}N_j)\lto\colim_i(\prod_{i\to j}P_j)=\coprod_I M\] for some index
set $I$, as in the proof of Theorem~\ref{th:sub}. It follows from
Lemma~\ref{le:sub} that at least one component of this composite is
invertible. Thus $\chi$ belongs to $\V$, by Lemma~\ref{le:closure}. We
conclude that $\U$ is compact.
\end{proof}

There is an interesting consequence for the space of irreducible
characters on the category of finite-length modules over a hereditary
artin algebra. I am grateful to Claus Michael Ringel for providing the
key observation.

\begin{cor}\label{co:hered}
  Let $A$ be a connected hereditary artin algebra and $\mod A$ be the
  category of finite-length $A$-modules. Then the space $\Sp(\mod A)$
  is compact if and only if $A$ is of finite or tame representation
  type.
\end{cor}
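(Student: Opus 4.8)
The plan is to analyze the two representation-type cases separately using the machinery built up in the paper, especially Proposition~\ref{pr:sub}. Recall that for a connected hereditary artin algebra $A$, the category $\mod A$ decomposes the indecomposables into preprojective, regular, and preinjective components, and the representation type is governed by the (valued) Dynkin/Euclidean dichotomy: $A$ is of finite type iff the underlying valued graph is Dynkin, of tame type iff it is Euclidean, and of wild type otherwise. In all cases each indecomposable is the source of a left almost split morphism, so Proposition~\ref{pr:sub} applies, and by Corollary~\ref{co:top} the finite characters (the $\chi_M$ with $M$ indecomposable of finite length) are dense and isolated in $\Sp(\mod A)$.

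For the "if" direction: if $A$ is of finite type, $\mod A$ has only finitely many indecomposables, so $\Sp(\mod A) = \V_n$ for suitable $n$ and is compact by Lemma~\ref{le:top}. If $A$ is of tame type, the key input is that the generic module $G$ (the unique indecomposable endofinite module that is not of finite length, coming from the function field of the tame hereditary algebra) has the property that $\sub G$ is all of $\mod A$: every finite-length module embeds into a finite direct sum of copies of $G$. Equivalently, $\Sp(\mod A) = \sub\chi_G$, and then condition (3) of Proposition~\ref{pr:sub} gives compactness directly — here one should check that $\Sp(\mod A)$ itself, being infinite, contains a minimal infinite subobject-closed subset, but in fact the cleaner route is: $\Sp(\mod A) = \sub\chi_G$ is itself of the form $\sub\chi$, and then rerun the (3)$\Rightarrow$(1) argument of Proposition~\ref{pr:sub} verbatim (it does not actually use minimality, only that $\U = \sub\chi_M$ and that every finite-length module embeds in a coproduct of copies of $M$) to conclude every infinite closed subset contains $\chi_G$, hence $\Sp(\mod A)$ is compact.

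For the "only if" direction: suppose $A$ is of wild type; I want to produce an infinite subobject-closed subset $\U \subseteq \Sp(\mod A)$ that is minimal with this property but contains no infinite character, so that Proposition~\ref{pr:sub} forces $\U$ to be non-compact, whence $\Sp(\mod A)$ is non-compact (a closed subspace of a compact space is compact). This is Ringel's key observation: for wild hereditary $A$ one can find a family of indecomposable finite-length modules — e.g.\ the preprojective modules, which form a single $\tau^{-1}$-orbit growing in length — such that the smallest subobject-closed subcategory $\D$ they generate has $\Sp\D$ infinite and minimal, yet $\D$ contains no endofinite module of infinite length (there is no "generic" module supported on the preprojectives alone; more precisely, any endofinite module in the relevant subcategory is a finite direct sum of preprojectives). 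The minimality must be argued from the structure of subobject-closed subcategories of $\mod A$ in the wild case: any infinite subobject-closed subcategory contains infinitely many preprojectives or infinitely many preinjectives or infinitely many modules from some tube-like family — and in the wild case the preprojective part forms a chain under the subobject relation forcing a smallest infinite piece. This existence-and-minimality step is the main obstacle, since it requires genuine input about wild hereditary algebras (growth of dimension vectors along $\tau^{-1}$-orbits, and the non-existence of a generic module for a proper subobject-closed subcategory) rather than the formal topology developed here; the rest is bookkeeping with Lemma~\ref{le:top}, Lemma~\ref{le:isolated}, and Proposition~\ref{pr:sub}.

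In summary, the proof reduces to three points: (i) finite type $\Rightarrow$ compact, immediate from Lemma~\ref{le:top}; (ii) tame type $\Rightarrow$ compact, via the generic module and the argument of (3)$\Rightarrow$(1) in Proposition~\ref{pr:sub}; (iii) wild type $\Rightarrow$ not compact, by exhibiting an infinite minimal subobject-closed subset with no infinite character and invoking (2)$\Leftrightarrow$(1) of Proposition~\ref{pr:sub}. I expect (iii), and specifically the minimality, to be where the real work and Ringel's insight lie.
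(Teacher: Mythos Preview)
Your treatment of the finite and wild cases matches the paper's: finite type is immediate, and for wild type the paper likewise takes $\D$ to be the preprojective modules, cites \cite{Ri2012} for the minimality of $\D$ among infinite subobject-closed subcategories and \cite{Ri2013} for the absence of an infinite-length endofinite module built from preprojectives, and then invokes Proposition~\ref{pr:sub}.

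The tame case, however, contains two genuine errors. First, your claim that $\sub G=\mod A$ is false: for a tame hereditary algebra the generic module $G$ is torsion-free in Ringel's sense, so its finite-length submodules (and those of $G^n$) are precisely the preprojective modules, not all of $\mod A$. Thus $\sub\chi_G$ is only $\Sp(\text{preprojectives})$, not $\Sp(\mod A)$. Second, your assertion that the proof of (3)$\Rightarrow$(1) in Proposition~\ref{pr:sub} ``does not actually use minimality'' is wrong: minimality is exactly what guarantees that the subobject-closed subcategory generated by an arbitrary infinite closed $\V\subseteq\U$ is all of $\D$, which is what produces the monomorphisms $M_i\to N_i$ with $N_i$ a coproduct of objects whose characters lie in $\V$. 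Without minimality, take $\V$ to be the closure of a single tube; its subobject-closed closure is far from all of $\mod A$, and no preprojective $M_i$ embeds into a sum of modules from that tube, so the argument collapses.

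The paper does not attempt an internal argument here: it simply appeals to the explicit description of the Ziegler spectrum of a tame hereditary algebra in \cite{Pr1998,Ri1998}, from which one reads off that $\chi_G$ lies in the closure of every infinite set of finite-dimensional indecomposables, and compactness follows. If you want a self-contained proof along your lines, that closure statement is precisely what you would need to establish, and it does not fall out of Proposition~\ref{pr:sub}.
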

\begin{proof}
  The assertion is clear from Theorem~\ref{th:main} when $A$ is of
  finite representation type, and it follows by inspection of the
  Ziegler spectrum when $A$ is tame, see \cite{Pr1998, Ri1998}. Now
  suppose that $A$ is of wild representation type and denote by $\C$
  the category of preprojective modules; it is minimal among the
  subobject-closed subcategories of $\mod A$ which are of infinite
  type, by \cite[Example~1]{Ri2012}. There is no indecomposable
  endofinite $A$-module of infinite length which is a union of modules
  from $\C$, by \cite{Ri2013}.  Thus the corresponding subset $\Sp\C$
  is not compact, by Proposition~\ref{pr:sub}. It follows that
  $\Sp(\mod A)$ is not compact.
\end{proof}

\section{Some open Problems}

Not much seems to be known about the space $\Sp\C$ of characters for a
length category $\C$. For instance, we need a better understanding of
the interplay between finite and infinite characters.  In this section
we address some open problems. We fix a $k$-linear Hom-finite abelian
category $\C$ having only finitely many non-isomorphic simple objects.

\subsection*{The support of a character}

It would be interesting to find for each irreducible character $\chi$
some appropriate set of finite irreducible characters
\emph{supporting} $\chi$. More specifically, we ask the following.

\begin{question}
Is every irreducible character of degree $n$
in the closure  of a set of finite irreducible characters of
  degree at most $n$?
\end{question}

\subsection*{Subobject-closed subsets}

The lattice of subobject-closed subsets of $\Sp\C$ contains a lot of
information. For instance, the space $\Sp\C$ itself embeds naturally into
this lattice, by Theorem~\ref{th:sub}. This motivates the following
question.

\begin{question}
When is a subobject-closed subsets of $\Sp\C$ of the form $\sub\chi$
for some irreducible character $\chi$?
\end{question}

There is a compactness result for subobject-closed subcategories which
is due to Ringel \cite{Ri2012}; his proof uses properties of the
Gabriel--Roiter measure. An alternative proof in \cite{KrPr2012} is
based on the compactness of the Ziegler spectrum.

A full additive subcategory $\D\subseteq\C$ is of \emph{infinite type}
if there are infinitely many isomorphism classes of indecomposable
objects in $\D$.

\begin{prop}
  Suppose that each indecomposable object in $\C$ is the source of a
  left almost split morphism.  Then each subobject-closed additive
  subcategory of $\C$ that is of infinite type contains one which is
  minimal among all subobject-closed additive subcategories of
  infinite type.
\end{prop}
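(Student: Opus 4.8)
\emph{The plan.} The plan is to deduce this from a Zorn's lemma argument, the only non‑formal input being a quasi‑compactness property of an ambient Ziegler‑type spectrum, in the spirit of \cite{KrPr2012}. The subobject‑closed additive subcategories of $\C$ form a set, ordered by inclusion, and an arbitrary intersection of them is again subobject‑closed. So, applying Zorn's lemma inside the given subcategory of infinite type (partially ordered by reverse inclusion), it suffices to prove: \emph{if $(\D_i)_{i\in I}$ is a downward‑directed family of subobject‑closed additive subcategories of $\C$, each of infinite type, then $\D=\bigcap_{i\in I}\D_i$ is of infinite type.}

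\emph{Passing to $\A$.} I would work in $\A=\Lex(\C^\op,\Ab)$ and its Ziegler spectrum $\Zsp\A$ --- the set of isomorphism classes of indecomposable pure‑injective objects of $\A$, whose closed subsets correspond to the definable subcategories of $\A$. Since $\A$ is locally coherent (its finitely presented objects are the finite‑length objects, which form the abelian category $\C$), the space $\Zsp\A$ is quasi‑compact. For a subobject‑closed subcategory $\E\subseteq\C$, let $\mathsf{Zg}(\E)\subseteq\Zsp\A$ be the closed subset attached to the definable subcategory of $\A$ generated by $\E$; by Lemmas~\ref{le:sub} and \ref{le:closure} this refines the embedding $\Sp\E\hookrightarrow\Sp\C$, and every indecomposable object of $\E$ is a (finite‑length) point of $\mathsf{Zg}(\E)$. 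The standing hypothesis enters through the analogue of Lemma~\ref{le:isolated} for $\Zsp\A$: if $M\to N$ is left almost split in $\C$ and $F$ is the finitely presented functor on $\A$ it presents, then $F(X)\ne 0$ iff $M$ is a direct summand of $X$, so $\{M\}=\{X\in\Zsp\A\mid F(X)\ne 0\}$ is open; thus every finite‑length indecomposable object of $\C$ is an isolated point of $\Zsp\A$, and the infinite‑length points form a closed subset $\Zsp\A^\infty\subseteq\Zsp\A$.

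\emph{The dichotomy.} The crux is the equivalence: \emph{$\E$ is of infinite type $\iff$ $\mathsf{Zg}(\E)\cap\Zsp\A^\infty\ne\varnothing$.} For ``$\Leftarrow$'': if $\E$ is of finite type, write $\E=\add(M_1\oplus\dots\oplus M_r)$ with the $M_j$ of finite length (hence endofinite); the definable subcategory generated by $\E$ is then $\Add(M_1\oplus\dots\oplus M_r)$ --- closed under filtered colimits and products as in the proof of Theorem~\ref{th:sub} --- so $\mathsf{Zg}(\E)=\{M_1,\dots,M_r\}$ has no infinite‑length point. For ``$\Rightarrow$'': if $\E$ is of infinite type then $\mathsf{Zg}(\E)$ contains infinitely many pairwise non‑isomorphic finite‑length objects, each of them isolated in $\mathsf{Zg}(\E)$ by the previous paragraph; since $\mathsf{Zg}(\E)$ is closed in the quasi‑compact space $\Zsp\A$ it is quasi‑compact, hence not discrete, so it has a non‑isolated point, which --- again by the previous paragraph --- cannot be of finite length and so lies in $\Zsp\A^\infty$.

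\emph{Conclusion and the expected obstacle.} Since $\D_j\subseteq\D_i$ forces $\mathsf{Zg}(\D_j)\subseteq\mathsf{Zg}(\D_i)$, the sets $\mathsf{Zg}(\D_i)\cap\Zsp\A^\infty$ ($i\in I$) form a downward‑directed family of non‑empty closed subsets of the quasi‑compact space $\Zsp\A$, so their intersection is non‑empty; any point $X$ of it is of infinite length and lies in $\bigcap_{i\in I}\mathsf{Zg}(\D_i)$. The step I expect to be the main obstacle is the identity
\[\textstyle\bigcap_{i\in I}\mathsf{Zg}(\D_i)=\mathsf{Zg}\bigl(\bigcap_{i\in I}\D_i\bigr),\]
i.e.\ that passing to the definable closure in $\A$ commutes with downward‑directed intersections of subobject‑closed subcategories; this is the counterpart for $\A$ of the identity $\Sp(\bigcap_i\C_i)=\bigcap_i\Sp\C_i$ used earlier, and I would prove it by adapting \cite[Corollary~2.3]{KrPr2012}, whose proof again rests on the compactness of the Ziegler spectrum. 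Granting it, $X\in\mathsf{Zg}(\D)\cap\Zsp\A^\infty$, so $\D$ is of infinite type by the dichotomy, which finishes the reduction. An alternative would be to argue inside $\C$ via the Gabriel--Roiter measure as in Ringel's original proof \cite{Ri2012}; one cannot, however, simply reduce to the module case $\C=\mod A$, since $\C$ need not have enough injective objects --- e.g.\ the finite‑length torsion modules over a discrete valuation domain --- so the passage to $\A$ seems unavoidable.
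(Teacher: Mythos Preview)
Your proposal is correct and follows essentially the approach the paper indicates: the paper's own proof is the single line ``Adapt the proof of Corollary~4.3 in \cite{KrPr2012}'', and that corollary is proved precisely by the Zorn/Ziegler-compactness argument you sketch --- reducing to directed intersections, translating ``infinite type'' into the existence of a non-finite-length point in the associated closed subset of the Ziegler spectrum via isolation of finite-length points (your use of the left-almost-split hypothesis through Lemma~\ref{le:isolated}), and invoking the intersection identity you flag as the main obstacle, which is exactly the content adapted from \cite[Corollary~2.3]{KrPr2012}. Your closing remark that one cannot simply reduce to $\C=\mod A$ is also apt and consistent with the paper's own observations.
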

\begin{proof}
Adapt the proof of Corollary~4.3 in  \cite{KrPr2012}.
\end{proof}

From this proposition it follows that each infinite subobject-closed
subset of $\Sp\C$ contains a minimal one.

\begin{question}
  When does a minimal infinite subobject-closed subset of $\Sp\C$
  contain an infinite character?
\end{question}

Note that such a minimal infinite subobject-closed subset with an
infinite character $\chi$ is necessarily of the form $\sub\chi$, by
Proposition~\ref{pr:sub}.

\begin{rem}
  Let $\D\subseteq\C$ be a minimal subobject-closed subcategory of
  infinite type. Then for each $n\in\bbN$ the number of isomorphism
  classes of indecomposable objects of length $n$ in $\D$ is finite;
  see \cite[Theorem~2]{Ri2012}.
\end{rem} 

\subsection*{Compactness}

When $\Sp\C$ is compact, one gets an elegant proof of the Second
Brauer--Thrall Conjecture, because an infinite space cannot be
discrete; see Theorem~\ref{th:main}. This observation motivates the
following question.

\begin{question}
When is $\Sp\C$ compact?
\end{question}

Even when $\Sp\C$ is not compact, we can ask for suitable subsets
which are compact. Here is one possible question.

\begin{question}
When is a subset of the form $\sub\chi$ compact?
\end{question}

%Note that such minimal infinite subobject-closed subset 

\subsection*{Partial orders on the set of characters}

The set of characters $\Ob\C\to\bbN$ is partially ordered by
\[\chi\le \psi\quad\iff\quad \chi(X)\le\psi(X)\text{  for all
}X\in\Ob\C.\]
Observe that for each character $\psi$ the set
\[\U_\psi=\{\chi\in\Sp\C\mid \chi\le \psi\}\]
is closed and compact; this follows from Lemma~\ref{le:top}.

From Theorem~\ref{th:sub} it follows that the inclusion relation for
$\sub\chi$ provides another partial order on the set of irreducible
characters.

\begin{question}
How are the partial orders $\le$ and $\subseteq$ on $\Sp\C$ related?
\end{question}
We refer to \cite{KrPr2012,Sch2004} for more details on these partial orders.

\end{document}